\numberwithin{equation}{section}
\DeclareMathAlphabet{\mathbbo}{U}{bbold}{m}{n}
\newtheorem{theorem}{Theorem}[section]
\newtheorem{lem}[theorem]{Lemma}
\newtheorem{cor}[theorem]{Corollary}
\newtheorem*{question}{Question}
\newenvironment{rmq}{\stepcounter{theorem}\noindent\textbf{Remark \thetheorem}.}{}
\newcommand{\N}{\mathbb N}
\newcommand{\R}{\mathbb R}
\newcommand{\D}{\mathbb D}
\title{Existence of a Sidon set for the distinct distance constant}
\author{Robin Riblet\thanks{\textsc{ENS Paris-Saclay, Centre Borelli, UMR 9010, 91190 Gif-sur-Yvette, France}, robin.riblet@ens-paris-saclay.fr} , Titien Schehr\thanks{\textsc{ENS Paris-Saclay, Centre Borelli, UMR 9010, 91190 Gif-sur-Yvette, France}, titien.schehr@ens-paris-saclay.fr}}
\begin{document}

    \maketitle

  \begin{abstract}
We highlight a certain compactness of Sidon sets and $B_2[g]$-sets and provide several applications. Notably, we prove the existence of such sets that maximize certain functions. In particular, we show the existence of a Sidon set whose reciprocal sum is equal to the distinct distance constant. We also improve the best known bounds for this constant.

\end{abstract}

\section*{Introduction}

A Sidon set, or $B_2$-sequence, is a sequence of positive integers $(s_i)$ such that all the sums $s_i+s_j$ $(i \leq j)$ are distinct. These are well-studied sets, and it's well known, following the work of Singer, Erd\H{o}s, Bose, and Chowla (see \cite{OBryant}), that the maximum cardinality of a Sidon set in the first $n$ integers is of order $\sqrt{n}$. It is legitimate to ask whether there is an infinite Sidon set of density of order $n^{-1/2}$. Erd\H{o}s has shown that the answer is no (see \cite{HalberstamRoth} Theorem 8 p.89), and the best density currently known is due to Ruzsa \cite{Ruzsa}, who proves the existence of a Sidon set of density of order $n^{\sqrt{2}-2}$. This result was a real breakthrough, since until then, and for more than 50 years, the best known density had been given by the greedy algorithm $G$, known as the Mian–Chowla sequence. Its density is only of order $n^{-2/3}$, which is less than the density established by Ruzsa, but this sequence has other advantages. For a long time, this was the $B_2$-sequence with the largest known reciprocal sum $S_G=\sum_{g\in G}1/g$ (see \cite{Guy} p.351). Lewis (see \cite{Lewis} p.164-165) found that
$$
2.158435 \leq S_G \leq 2.158677.
$$
In 1991, Zhang \cite{Zhang} found a Sidon sequence with a reciprocal sum greater than $S_G$. Zhang’s sequence $Z$ is obtained by applying the greedy algorithm while imposing that the 15th term be 229. Zhang proved that
$$
S_Z>2.1597.
$$
In 2015, by repeating Zhang's construction but setting $Z_{27}=962$, Salvia \cite{Salvia} proves the existence of a new Sidon set $H$ such that
$$S_Z < 2.16027651 < S_H<2.16028417  .$$
Very recently (June 2025), L. Kleinwaks\footnote{see \url{https://oeis.org/A384729}} exhibited a Sidon set $K$ of 1010 terms with a reciprocal sum greater than $2.16150003$. As he himself points out, this set can be supplemented by the greedy algorithm, which improves this bound to $2.1615001$. This is the best known bound currently. The distinct distance constant (DDC) is the supremum of the set of the reciprocal sums of Sidon sequences. 
$$\mathrm{DDC}=\sup\left\{ \sum_{s\in S}1/s \ : \ S\subset\N \text{ is a Sidon set} \right\}.$$
Until now, we did not know whether this bound could be reached, i.e. whether there exists a Sidon set with a reciprocal sum equal to the DDC (see \cite{Salvia} p.1). We show in this paper that it does.

 \setcounter{section}{1}
\setcounter{theorem}{3}
\begin{theorem}
    There exists a Sidon set $S\subset\N$ such that 
    $$\sum_{s\in S}\frac{1}{s}=\mathrm{DDC}.$$
\end{theorem}

In fact, we will prove a more general result that directly implies it.

    \setcounter{section}{1}
\setcounter{theorem}{1}

\begin{theorem}
    Let $\alpha> \frac{1}{2}$, there exists a Sidon set $S_\alpha\subset\N$ such that 
    $$\sum_{s\in S_\alpha}\frac{1}{s^\alpha}=\sup\left\{ \sum_{s\in S}s^{-\alpha} \ : \ S\subset\N \text{ is a Sidon set}\right\}.$$
\end{theorem}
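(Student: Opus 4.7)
The plan is to exploit a compactness property of Sidon sets, as announced in the abstract. Let $M$ denote the supremum in the statement; since $\alpha>\frac12$, the classical bound $|T\cap[1,N]|=O(\sqrt N)$ valid for every Sidon set $T$ already forces $M<\infty$ via Abel summation. Pick a maximizing sequence $(S_n)$ of Sidon sets with $\sum_{s\in S_n}s^{-\alpha}\to M$. Because $\{0,1\}^{\N}$ is sequentially compact in the product topology, a diagonal extraction yields a subsequence---still denoted $(S_n)$---along which $\1_{S_n}(k)$ converges for every $k\in\N$. I call $S\subset\N$ the set whose indicator is this pointwise limit.

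I would then verify that $S$ is itself Sidon. If $a+b=c+d$ with $a,b,c,d\in S$ and $\{a,b\}\neq\{c,d\}$, then for $n$ large enough all four elements already belong to $S_n$, contradicting its Sidon property. Consequently $\sum_{s\in S}s^{-\alpha}\leq M$ is automatic.

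The decisive step is the reverse inequality $\sum_{s\in S}s^{-\alpha}\geq M$, and this is precisely where $\alpha>\frac12$ enters. Combining $|T\cap[1,N]|\leq\sqrt N+O(N^{1/4})$ with Abel summation yields a constant $C_\alpha$ such that
\[\sum_{\substack{s\in T\\ s>N}}s^{-\alpha}\;\leq\;C_\alpha\,N^{1/2-\alpha}\]
for every Sidon set $T$, a bound which tends to $0$ as $N\to\infty$. Given $\varepsilon>0$ I fix $N$ large enough that the right-hand side is at most $\varepsilon$, then split $\sum_{s\in S_n}s^{-\alpha}$ into its $[1,N]$ part and its tail. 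The first part depends only on $S_n\cap[1,N]$, which by pointwise convergence equals $S\cap[1,N]$ for all large $n$; the second part is at most $\varepsilon$ uniformly in $n$. Passing to the limit $n\to\infty$ and then $\varepsilon\to0$ gives $M\leq\sum_{s\in S}s^{-\alpha}$, which finishes.

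The main obstacle is not any single step---both the diagonal extraction and the transfer of Sidonicity to the limit are soft---but rather ensuring that mass does not escape to infinity along the maximizing sequence. That uniform tail control is exactly what $\alpha>\frac12$ secures through the sharp Sidon counting bound; without it a maximizing sequence could concentrate its mass arbitrarily far out and the limit set might have strictly smaller reciprocal sum, so the hypothesis is essentially sharp in this approach.
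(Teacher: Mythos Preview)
Your argument is correct. The compactness of $\{0,1\}^{\N}$, the closedness of the Sidon condition under pointwise limits, and the uniform tail estimate via Abel summation and the $O(\sqrt N)$ counting bound combine exactly as you describe to show the supremum is attained.

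However, this is not the route the paper takes for this theorem. The paper proves it as a corollary of its Theorem~\ref{mainResult}: it embeds Sidon sets into $\mathcal O(\D)$ via their generating functions $f_S(z)=\sum_{s\in S}z^s$, shows that the image $\mathfrak S$ is compact (being the preimage of a compact set under $f\mapsto\frac{f^2(z)+f(z^2)}{2}$), and then represents $\sum_{s\in S}s^{-\alpha}$ as an integral $\frac{1}{\Gamma(\alpha)}\int_0^1 f_S(u)\frac{(-\ln u)^{\alpha-1}}{u}\,du$ via a Mellin-type identity. Continuity of this integral functional follows from dominated convergence using the pointwise bound $f_S(t)\le\sqrt{2t/(1-t)}$, and the condition $\alpha>\tfrac12$ enters as the integrability condition $\int_0^1\frac{|f_\alpha(t)|\sqrt t}{\sqrt{1-t}}\,dt<\infty$. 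Your approach is more elementary and self-contained for this particular statement; in fact it is essentially the general mechanism the paper itself abstracts later in Section~\ref{SectionOtherPatterns} (Theorem~\ref{allPatterns} and Corollary~\ref{Bhg}), where continuity on the compact $\mathcal P(\N)$ is all that is needed. What the paper's analytic detour buys is a single framework (Theorem~\ref{mainResult}) that simultaneously handles any functional of the form $S\mapsto\int_0^1 f_S(t)\,|f(t)|\,dt$ with $\int_0^1\frac{|f(t)|\sqrt t}{\sqrt{1-t}}\,dt<\infty$, not just the power sums $\sum s^{-\alpha}$.
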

In Section \ref{sectionProprietes}, we establish some properties of these maximal sets, but we do not know if there is a unique Sidon set achieving the maximum. $H$ and $Z$ share the first 27 terms in common, but until now, we only knew from Taylor and Yovanof \cite{Taylor} that if such a set existed (what this article proves), then it must begin with the values $1,2,4$. 

In \cite{Levine}, Levine showed that $\mathrm{DDC}<2.37366$.
The upper bound was improved by Taylor \cite{Taylor}, who obtained $\mathrm{DDC}<2.24732646$. In Section \ref{sectionMajDDC}, we enhance this bound by adopting his approach and by using the well-known upper bound of Lindström \cite{Lindstrom} on the cardinality of a Sidon set within the first $n$ integers.

    \setcounter{section}{5}
\setcounter{theorem}{0}
\begin{theorem}\label{majDDC}
    We have $2.1615001<\mathrm{DDC}\leq 2.247307$.
\end{theorem}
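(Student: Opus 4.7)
The plan is to follow Taylor's term-by-term strategy for bounding the reciprocal sum and to sharpen it by substituting Lindström's inequality for the elementary pairwise-difference bound used by Levine. Enumerate any Sidon set as $S = \{s_1 < s_2 < \cdots\} \subset \N$. Lindström's theorem asserts that for every $N \geq 1$, $|S \cap [1,N]| \leq \sqrt{N} + N^{1/4} + 1$; applied to $N = s_k$, this gives $k \leq \sqrt{s_k} + s_k^{1/4} + 1$, which inverts to an explicit lower bound $s_k \geq \ell(k)$, where $\ell(k)$ is the smallest positive integer $N$ satisfying $\sqrt{N} + N^{1/4} + 1 \geq k$. For small indices $k$ this is weaker than the Levine-type bound $s_k \geq 1 + k(k-1)/2$ obtained from counting the $\binom{k}{2}$ pairwise differences $s_j - s_i$ with $1 \leq i < j \leq k$, so I would take $\ell(k)$ to be the maximum of the two. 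From some index onward Lindström's bound dominates and is asymptotically twice as strong as the pairwise-difference bound, which is precisely where the improvement over Levine (and the refinement of Taylor) comes from.

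Summing the reciprocals yields the uniform estimate
$$\mathrm{DDC} \;\leq\; \sum_{k \geq 1} \frac{1}{\ell(k)},$$
an upper bound that is independent of the particular Sidon set. To convert this into the explicit numerical bound $2.247307$, I would split the series at a suitable cutoff $K$: the head $\sum_{k=1}^{K} 1/\ell(k)$ is a finite computation with integer denominators, evaluated to high precision; for the tail, the inequality $\ell(k) \geq \bigl((\sqrt{4k-3}-1)/2\bigr)^4$ reduces the bound to a tail of the form $\int_{K}^{\infty} dt/t^2 = 1/K$ up to an explicit lower-order correction. The cutoff $K$ is then tuned so that head plus tail falls below $2.247307$.

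The main obstacle is quantitative rather than conceptual. The improvement over Taylor's $2.24732646$ is of order only $2 \cdot 10^{-5}$, so the cutoff $K$ must be pushed sufficiently far and the tail estimate kept sharp enough that the integer rounding encoded in $\ell(k)$ together with the residual tail stays below this threshold. Practically this amounts to carrying the explicit computation up to a large $K$ and retaining subleading terms in the tail (for instance via an Euler--Maclaurin correction rather than a crude integral comparison), while also checking that Lindström's inequality is applied in its sharpest available form for all the moderate values of $N$ that contribute meaningfully to the head.
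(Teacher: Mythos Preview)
Your approach has a genuine numerical gap: bounding $\mathrm{DDC}$ by $\sum_{k\geq 1} 1/\ell(k)$, with $\ell(k)$ the pointwise maximum of the Levine bound $1+\binom{k}{2}$ and the Lindström inversion, cannot reach $2.247307$. The Levine bound alone already contributes $\sum_{k=1}^{9}\frac{1}{1+\binom{k}{2}}\approx 2.153$ for the first nine terms, and the Lindström-based tail $\sum_{k\geq 10} 1/(k-\sqrt{k})^2$ adds roughly another $0.17$, so the method yields at best something near $2.32$. The underlying reason is that a \emph{pointwise} lower bound on $s_k$ ignores the joint Sidon constraint on the first several terms; for example it only gives $s_4\geq 7$, whereas any Sidon set starting $1,2,4,\ldots$ already forces $s_4\geq 8$.

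The paper's proof is structurally different and not self-contained. It keeps Taylor's three-block decomposition intact: the first block (the first $20$ terms) is controlled by the \emph{maximum reciprocal sum of a finite Sidon set}, computed in \cite{Taylor}; the second block (terms $21$ through $1099$) uses a separate estimate from \cite{Taylor2}. Together these account for essentially all of the $2.247\ldots$, and they rely on finite searches/structural results, not on $\ell(k)$. The paper improves \emph{only} the third block, replacing Taylor's tail estimate $\sum_{n\geq 1100} 1/s_n \leq \frac{2/c}{1099}$ by the Lindström-based bound $s_n>(n-\sqrt{n})^2$, which gives $\sum_{n\geq 1100}1/s_n<0.000947$. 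This tail improvement is only of order $10^{-5}$, precisely the margin between Taylor's $2.24732646$ and the claimed $2.247307$. Your proposal uses Lindström for the tail in the same spirit, but by discarding Taylor's first two blocks you lose the $\sim 0.07$ that those sharper finite-set computations provide, and the argument fails to deliver the stated constant.
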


Our method, developed in Section \ref{sectionMainResult}, can be widely adapted to other types of functions to be maximized and to other patterns than Sidon sets. In the last Section (Section 6), we give an equivalent to Theorem \ref{MainResult} for sum-free sets (ie. sets with no solution to the equation $x+y=z$).

 \setcounter{section}{6}
\setcounter{theorem}{1}
\begin{theorem}
    Let $\alpha\in\R$. There exists $F_\alpha\in\mathcal F$ such that
    $$\sum_{f\in F_\alpha} f^{-\alpha}=\sup\left\lbrace \sum_{f\in F} f^{-\alpha} \ : \ F\in\mathcal F\right\rbrace,$$
    where $\mathcal{F}$ is the set of the sum-free sets in $\N^*$.
\end{theorem}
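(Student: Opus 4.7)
The plan is to mirror the compactness argument of Section \ref{sectionMainResult}, exploiting the fact that sum-freeness is a closed condition — defined by forbidden triples $(a,b,c) \in F^3$ with $a + b = c$ — and hence survives pointwise limits of indicator functions. I first dispose of the easy regime $\alpha \leq 1$: the set $O = \{2k+1 : k \geq 0\}$ of odd integers is sum-free (a sum of two odd numbers is even), and $\sum_{n \in O} n^{-\alpha}$ diverges for every $\alpha \leq 1$, so the supremum equals $+\infty$ and is trivially attained by $F_\alpha = O$.

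Assume henceforth $\alpha > 1$, so that $S := \sup_{F \in \mathcal{F}} \sum_{f \in F} f^{-\alpha}$ is finite (bounded by $\zeta(\alpha)$). Pick a maximizing sequence $(F_n)_{n \geq 1}$ with $\sum_{f \in F_n} f^{-\alpha} > S - 1/n$. By compactness of $\{0,1\}^{\N^*}$ under the product topology, a diagonal extraction yields a subsequence (still denoted $(F_n)$) for which $\1_{F_n}(k)$ converges for every $k \in \N^*$; set
$$F_\alpha := \{k \in \N^* : k \in F_n \text{ for all sufficiently large } n\}.$$
If $a, b \in F_\alpha$ and $a + b \in F_\alpha$, then for some $N$ all three belong to $F_n$ for every $n \geq N$, contradicting the sum-freeness of $F_N$; hence $F_\alpha$ is sum-free. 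To see it attains $S$, fix $\epsilon > 0$ and $K$ with $\sum_{k > K} k^{-\alpha} < \epsilon$. For $n$ large enough $F_n \cap [1, K] = F_\alpha \cap [1, K]$, so
$$\sum_{f \in F_\alpha \cap [1,K]} f^{-\alpha} = \sum_{f \in F_n \cap [1,K]} f^{-\alpha} \geq \sum_{f \in F_n} f^{-\alpha} - \epsilon \geq S - \frac{1}{n} - \epsilon.$$
Letting $n \to \infty$ and then $\epsilon \to 0$ gives $\sum_{f \in F_\alpha} f^{-\alpha} \geq S$, and the reverse inequality is immediate.

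The main point of contrast with the Sidon setting is that essentially no density input is required here. In the Sidon case one needs the Lindström-type bound $|S \cap [1, n]| = O(\sqrt{n})$ to control the tail in the regime $1/2 < \alpha \leq 1$, where $\sum n^{-\alpha}$ alone is divergent. For sum-free sets this difficulty does not arise: the convergence of $\sum k^{-\alpha}$ for $\alpha > 1$ already provides uniform tail control, and the degenerate regime $\alpha \leq 1$ is handled by the explicit witness $O$. The only genuine verification — that sum-freeness passes to pointwise limits — is immediate from the forbidden-triple definition, so the statement is really a clean specialization of the general compactness principle of Section \ref{sectionMainResult}.
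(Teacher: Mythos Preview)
Your argument is correct. Both you and the paper handle $\alpha\le 1$ identically via the odd integers. For $\alpha>1$, however, you and the paper take genuinely different routes. The paper illustrates how its holomorphic-function machinery adapts: it encodes sum-freeness by the map $G'(f)=f^2+(zf)'$, shows that $\{f_F:F\in\mathcal F\}$ sits inside a compact subset of $\mathcal O(\D)$, and then invokes the integral representation $\sum_{f\in F}f^{-\alpha}=\frac{1}{\Gamma(\alpha)}\int_0^1 f_F(u)\,u^{-1}(-\ln u)^{\alpha-1}\,du$ together with the crude bound $f_F(t)\le t/(1-t)$ to get continuity. You instead work directly in $\{0,1\}^{\N^*}$ with the product topology: a diagonal extraction gives a limit set, sum-freeness is preserved because it is a pointwise-closed condition, and the uniform tail bound $\sum_{k>K}k^{-\alpha}<\epsilon$ (available since $\alpha>1$) yields continuity of $F\mapsto\sum_{f\in F}f^{-\alpha}$. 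Your approach is more elementary and is in fact exactly the one the paper itself concedes is sufficient here (see the remark immediately following the proof and Theorem~\ref{allPatterns}); the paper's version is presented mainly to exhibit the flexibility of the $\mathcal O(\D)$ framework, whose real strength lies in the Sidon range $1/2<\alpha\le 1$ where no trivial tail bound is available.
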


Then we discuss a weaker adaptation of our method to obtain the existence of sets maximizing any continuous function on any closed subset of $\mathcal{P}(\N)$ with the discrete product topology ($\mathcal{P}(\N)$ seen as $\left\lbrace 0,1\right\rbrace^\N$).
 
\begin{theorem}
    Let $\mathcal{A}$ be a closed part of $\mathcal P(\N)$ and $\mathcal{F}:\mathcal A \rightarrow \R$ continuous, then there exists $A\in\mathcal A$ such that $\mathcal F(A)=\sup \mathcal{F}(\mathcal A)$.
\end{theorem}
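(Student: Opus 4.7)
The plan is straightforward: equip $\mathcal{P}(\N)$ with its natural compact topology, observe that a closed subset is then automatically compact, and apply the extreme value theorem.

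More precisely, I would identify $\mathcal{P}(\N)$ with $\{0,1\}^{\N}$ via characteristic functions, endowed with the product topology coming from the discrete topology on $\{0,1\}$. This Cantor space is compact by Tychonoff's theorem and metrizable, for instance by
$$d(A,B)=\sum_{n\in A\triangle B}2^{-n-1}.$$
This is presumably the same topology already in force in Section \ref{sectionMainResult}, since it is the one for which the set of Sidon sets is closed --- a fact exploited in the compactness arguments leading to Theorem \ref{MainResult}.

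Given this, the proof is essentially two lines: $\mathcal{A}$ is closed in the compact space $\mathcal{P}(\N)$, hence compact; a continuous real-valued function on a compact space attains its bounds, so some $A\in\mathcal{A}$ satisfies $\mathcal{F}(A)=\sup\mathcal{F}(\mathcal{A})$. If one wanted to avoid Tychonoff, one could argue directly by extracting, from any maximizing sequence $(A_n)\subset\mathcal{A}$, a pointwise-convergent subsequence via a standard diagonal extraction on the indicator functions, and then use closedness of $\mathcal{A}$ together with continuity of $\mathcal{F}$.

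There is no real obstacle in proving the theorem itself. What does require care in applications is verifying continuity of the target functional: for $A\mapsto\sum_{a\in A}a^{-\alpha}$, continuity on the Sidon sets follows from uniform convergence of the tails, itself a consequence of the Lindström-type density bound used in Section \ref{sectionMajDDC}. For a general closed $\mathcal{A}$ with a functional only defined by a formal pointwise formula, continuity may fail, which is presumably why the authors present this result as a weaker, abstract principle, distinct from the concrete maximizing theorems for Sidon and sum-free sets.
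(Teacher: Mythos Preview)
Your proposal is correct and essentially matches the paper's proof. The paper takes precisely the sequential route you mention as an alternative---diagonal extraction to get sequential compactness of $\mathcal P(\N)$, then pass to a convergent subsequence of a maximizing sequence and use closedness plus continuity---while your primary phrasing invokes Tychonoff and the extreme value theorem; these are interchangeable for this elementary statement.
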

In particular, this allows us to extend Theorem \ref{MainResult} to all $B_h[g]$.

\begin{cor}
Let $g\geq 2$, $h \geq 2$, and $\alpha> \frac{1}{h}$, there exists $B_\alpha\in B_h[g]$ such that
    $$\sum_{b\in B_\alpha}\frac{1}{b^\alpha}=\sup\left\{ \sum_{b\in B} b^{-\alpha} \ : \ B\in B_h[g]\right\}<+\infty.$$
\end{cor}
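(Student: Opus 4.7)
The plan is to invoke the preceding theorem with $\mathcal A = B_h[g]$ and $\mathcal F\colon B \mapsto \sum_{b\in B} b^{-\alpha}$. Identifying $\mathcal{P}(\N)$ with $\{0,1\}^{\N}$ under the product topology, three points need to be verified: that $B_h[g]$ is a closed subset of $\mathcal{P}(\N)$, that $\sup\mathcal F(B_h[g])$ is finite so $\mathcal F$ is real-valued, and that $\mathcal F$ is continuous on $B_h[g]$.

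For closedness, I would observe that a set $B$ fails to belong to $B_h[g]$ exactly when some integer $n$ admits at least $g+1$ distinct nondecreasing representations $n=b_{i_1}+\cdots+b_{i_h}$ with $b_{i_j}\in B$. Any such witness involves only finitely many elements of $B$ (all bounded by $n$), so the condition ``$B\notin B_h[g]$'' is a union of basic cylinders, and hence open. Thus $B_h[g]$ is closed.

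For finiteness and continuity I would rely on the classical generalization of Lindström's upper bound to $B_h[g]$-sets, which provides a constant $C=C(h,g)$ with $|B\cap[1,N]|\leq C N^{1/h}$ for every $B\in B_h[g]$ and every $N\geq 1$. Abel summation then yields a uniform tail estimate
$$\sum_{\substack{b\in B \\ b>N}} b^{-\alpha} \ \leq\ C'(h,g,\alpha)\,N^{1/h-\alpha},$$
and the assumption $\alpha>1/h$ makes this tail tend to $0$ uniformly in $B\in B_h[g]$. In particular $\mathcal F$ is uniformly bounded on $B_h[g]$, yielding finiteness of the supremum. The partial sums $\mathcal F_N(B)=\sum_{b\in B,\ b\leq N}b^{-\alpha}$ are continuous because they depend on only finitely many coordinates, and by the uniform tail estimate they converge uniformly to $\mathcal F$; hence $\mathcal F$ is continuous on $B_h[g]$. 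Applying the preceding theorem produces the desired maximizer $B_\alpha\in B_h[g]$.

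The only nontrivial input here is the cardinality estimate $|B\cap[1,N]|=O(N^{1/h})$ for $B_h[g]$-sets, which will be cited rather than reproved. All other steps — closedness, uniform approximation by finite sums, and the continuity deduced from uniform convergence — are direct analogues of what is already carried out for Sidon sets in Section \ref{sectionMainResult}, so no new difficulty is anticipated.
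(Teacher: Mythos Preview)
Your proposal is correct and follows the same strategy as the paper: apply the compactness theorem to the closed set $B_h[g]$ (with the identical cylinder-set argument for closedness) and verify that $\mathcal F_\alpha$ is continuous. The only difference is in how continuity is obtained: the paper cites the generating-function inequality $f_B(t)^h \leq C_{h,g}/(1-t)$ from Halberstam--Roth and proceeds as in Section~\ref{sectionMainResult}, whereas you use the equivalent counting bound $|B\cap[1,N]|=O(N^{1/h})$ together with Abel summation to get a uniform tail estimate and hence uniform convergence of the partial sums---both are standard and interchangeable here.
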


   \setcounter{section}{0}

\section{Main results}\label{sectionMainResult}

Let $\mathbb{D}$ denote the open unit disk in the complex plane, and let $\mathcal{O}(\D)$ be the complex vector space of analytic functions on this disk, with the topology of uniform convergence on compact sets. Let $\mathcal{S}\subset\mathcal{P}(\N^*)$ be the set of all Sidon sets and for all $g\geq 1$ and $h\geq 2$, $B_h[g]$ be the set of subsets of integers that, for all \( k \in \mathbb{N} \), admit at most \( g \) solutions $\{x_1,...,x_h\}$ to the equation \( x_1 + ...+x_h = k \). We obviously have $\mathcal{S}=B_2[1]$, but we will retain the notation \( S \) for readers who are not familiar with the \( B_2 \)-sets. For all $B\subset\mathbb{N}$, we denote
$$f_B(z)=\sum_{n=0}^\infty{\mathbbm{1}_B(n)z^n} ,$$
the generating function of $B$. We are going to show that
the set of generating functions of Sidon sets is compact.

\begin{theorem}\label{mainResult}
The set $\mathfrak S=\left\lbrace z\mapsto f_A(z) \ : \ A\in \mathcal{S}\right\rbrace$ is a compact subset of $\mathcal{O}(\D)$ with the
topology of uniform convergence on compact subsets. Moreover, for all $f\in \mathcal{C}^{0}([0,1[)$ such that $\int_{0}^1{\frac{|f(t)|\sqrt t}{\sqrt{1-t}}}dt < +\infty$, the quantity
$$\sup_{g \in \mathfrak{S}}{\int_{0}^{1}g(t)|f(t)| dt}$$
is finite and attained by some $g_0 \in \mathfrak{S}$.
\end{theorem}

\begin{proof}
Let $\mathcal{A}$ be the subset of $\mathcal{O}(\D)$ consisting of functions whose coefficients are either \( 0 \) or \( 1 \) and with the first coefficient equal to zero.
$$\mathcal{A}=\left\{ \sum_{n=1}^\infty{ a_n z^n}\in\mathcal{O}(\D) \ : \ a_i\in\{0,1\} \ \forall i\in\N \right\}.$$
Since the topology of uniform convergence on compact sets is metrizable, and since that $\mathcal{A}$ is sequentially compact by the diagonalization principle, it follows that $\mathcal{A}$ is compact. 

The following equivalence holds.
$$S\in \mathcal{S} \Leftrightarrow f_S \in \mathfrak{S}:=\mathcal{A} \cap G^{-1}(\mathcal{A}),$$
where 
\begin{align*}
G \ : \  \mathcal{O}(\D) & \longrightarrow \mathcal{O}(\D) \\  f & \longmapsto \left(z \mapsto \frac{f^2(z)+f(z^2)}{2}\right) .
\end{align*}
Indeed, if $B\subset\N$ and $z \in \D$, we have
$$ f_B(z)^2 = \sum_{n=0}^\infty{z^n \sum_{k=0}^n{\mathbbm{1}_B(k)\mathbbm{1}_B(n-k)}},$$
so
$$\frac{f_B(z)^2+f_B(z^2)}{2} = \sum_{n=0}^\infty{z^n \frac{\sum_{k=0}^n{\mathbbm{1}_B(k)\mathbbm{1}_B(n-k)}+\mathbbm{1}_{n \equiv 0[2]}\mathbbm{1}_B(n/2)}{2}}.$$
The general term of this power series corresponds exactly to the number of representations of \( n \) as a sum of integers \( \{b_i + b_j\} \) from \( B \), with \( b_i \leq b_j \). Therefore, the Sidon condition is precisely the requirement that all its coefficients are either \( 0 \) or \( 1 \). And as $\mathcal{S}\subset \mathcal{P}(\N^*)$, the first coefficient is equal to zero.

Therefore $S\in \mathcal{S}$ if and only if $f_S \in \mathfrak{S}$. But \( G \) is continuous, since both \( f \mapsto f^2 \) and \( f \mapsto (z \mapsto f(z^2)) \) are continuous. We can then deduce that \( G^{-1}(\mathcal{A}) \) is closed, as the preimage of a closed set under a continuous map. Furthermore, since \( \mathcal{A} \) is compact, \( \mathfrak{S} \) is a closed subset of a compact set, and therefore it is compact. This proves the first part of the theorem.

Then, let $g\in \mathfrak S$, we have $G(g) \in \mathcal{A}$ and so $\forall t \in ]0,1[$
$$ G(g)(t) \leq \sum_{n=1}^{\infty}t^n=\frac{t}{1-t},$$
and so
$$g(t)^2 \leq \frac{2t}{1-t}-g(t^2)\leq \frac{2t}{1-t}, $$
and finally
$$g(t) \leq \frac{\sqrt{2t}}{\sqrt{1-t}}. $$
Let $f\in \mathcal{C}^{0}([0,1[)$ such that $\int_{0}^1{\frac{|f(t)|\sqrt t}{\sqrt{1-t}}}dt < +\infty$ and define
$$m(f) = \sqrt{2}\int_{0}^1\frac{|f(t)|\sqrt t}{\sqrt{1-t}}dt.$$
Thus we have
$$\int_{0}^1g(t)|f(t)|dt \leq m(f).$$
The Dominated Convergence Theorem ensures that $g \mapsto \int_{0}^1g(t)|f(t)| dt$ is sequentially continuous, and thus continuous on \( \mathfrak{S} \) with the topology induced from \( \mathcal{O}(\D) \). This concludes the proof, since every continuous function on a compact set attains its extremes.
\end{proof}

As a corollary of Theorem \ref{mainResult}, we establish the following result.

\begin{theorem}\label{MainResult}
    Let $\alpha> \frac{1}{2}$, there exists a Sidon set $S_\alpha\subset\N^*$ such that 
    $$\sum_{s\in S_\alpha}\frac{1}{s^\alpha}=\sup\left\{ \sum_{s\in S}\frac{1}{s^\alpha} \ : \ S\subset\N^* \text{ is a Sidon set}\right\}.$$
\end{theorem}

\begin{proof}
    Let $\alpha> \frac{1}{2}$. For all $g\in\mathcal{A}$, let $S_g$ the set of indices of the nonzero coefficients of $g$ (in its analytical development). We aim to use Theorem \ref{mainResult}, and we will therefore show that there exists a function $f_{\alpha}$ such that $\int_{0}^1{\frac{|f(t)|\sqrt t}{\sqrt{1-t}}}dt < +\infty$ and for all $ g \in \mathfrak{S}$
$$ \sum_{s\in S_g}s^{-\alpha} = \int_{0}^1{g(t)f_{\alpha}(t)dt}.$$
Let $g$ therefore be in $\mathfrak S$. Recall that $\Gamma(z)=\int_{0}^{+\infty}{e^{-t}t^{z-1}dt}$.
By the substitution $st=u$, we have $\int_{0}^{+\infty}{e^{-st}t^{\alpha-1}dt}=s^{-\alpha}\Gamma(\alpha)$ and by writing $S_g=(s_n)_{n\geq 1}$,
\begin{align*}
 \sum_{s\in S_g}s^{-\alpha} & = \frac{1}{\Gamma(\alpha)}\sum_{n \in \mathbb{N}} \int_{0}^{+\infty}{e^{-s_nt}t^{\alpha-1}dt} \\
    & = \frac{1}{\Gamma(\alpha)}\int_{0}^{+\infty}\sum_{n \in \mathbb{N}} {e^{-s_nt}t^{\alpha-1}dt},
\end{align*}
by Fubini-Tonelli theorem. Thus
$$\sum_{s\in S_g}s^{-\alpha}=\frac{1}{\Gamma(\alpha)}\int_{0}^{+\infty}{f_{S_g}(e^{-t})t^{\alpha-1}dt} , $$
and by the substitution $u=e^{-t}$
$$\sum_{s\in S_g}s^{-\alpha}=\frac{1}{\Gamma(\alpha)}\int_{0}^{1}{f_{S_g}(u)\frac{(-\ln(u))^{\alpha-1}}{u}du}.$$

Now as $\alpha>1/2$, $f_\alpha :u \mapsto \frac{(-\ln(u))^{\alpha-1}}{u}$ verifies $\int_{0}^1{\frac{|f_\alpha(t)|\sqrt t}{\sqrt{1-t}}}dt < +\infty$. Indeed, the integrand is bounded near $t=1$, and near $t=0$, this follows from the integrability of $x^{\alpha-1}e^{-x/2}$ at $+\infty$ after the substitution $t=e^{-x}$. So by Theorem \ref{mainResult}, there exists $g_\alpha\in\mathfrak S$ such that 
$$\int_{0}^{1}g_\alpha(t)|f_\alpha(t)| dt=\sup_{g \in \mathfrak{S}}{\int_{0}^{1}g(t)|f_\alpha(t)| dt}=\sup_{g \in \mathfrak{S}}\sum_{s\in S_g}s^{-\alpha}.$$
It remains only to define $S_\alpha=S_{g_\alpha}$ to complete the proof.
\end{proof}
\begin{rmq}
    The bound $\alpha>1/2$ is optimal, as we'll see in Corollary \ref{alpha<1/2}.
  
\end{rmq}

\vspace{0.2cm}

For $\alpha=1$, Theorem \ref{MainResult} shows the existence of a Sidon set whose reciprocal sum is equal to the DDC.

\begin{theorem}
    There exists a Sidon set $S\subset\N$ such that 
    $$\sum_{s\in S}\frac{1}{s}=\mathrm{DDC}.$$
\end{theorem}

\section{Properties of Sidon sets with maximum reciprocal sum}\label{sectionProprietes}

Theorem \ref{MainResult} shows the existence of certain maximal Sidon sets. In this section, we will focus on some of their properties. First of all, it is important to note that we do not know whether there is uniqueness.

\begin{question}
    Are there several Sidon sets with a reciprocal sum equal to the constant DDC?
\end{question}

Throughout this section, $S$ will denote a Sidon set with maximum reciprocal sum: $\max_{S' \in \mathcal{S}}{\sum_{s \in S'}{\frac{1}{s}}} =\sum_{s \in S}{\frac{1}{s}}$. Theorem \ref{MainResult} guarantees its existence. We begin with some elementary remarks that are immediately implied by the maximality. First, $S$ is maximal for inclusion, which means that it is impossible to add an element to $S$. In other words :
$$S+S-S = \mathbb{N}.$$
 Furthermore, let $A=\{s_i\}_{i \in I}$ denote some elements of $S$, and $B:=\{s'_j\}_{j\in J}$ some integers that are not in $S$ such that $\sum_{j \in J}{\frac{1}{s'_j}} > \sum_{i \in I}{\frac{1}{s_i}}$ where the sum on the right might be $+\infty$. Then, $(S \backslash  A) \cup B$ is not a Sidon set. This follows also immediately from the maximality.

Since Zhang \cite{Zhang} proved that the Mian-Chowla Sidon set is not maximal for this linear form, it seems intuitive to think that $|S\cap\llbracket 1,n \rrbracket|$ would have to grow at least like $n^{1/3}$.
We were unable to prove this result. However, it is possible to show the following asymptotic estimates.

\begin{theorem} 
Let $S$ be a Sidon set such that $\max_{S' \in \mathcal{S}}{\sum_{s \in S'}{\frac{1}{s}}} =\sum_{s \in S}{\frac{1}{s}}$.
   There exists $C>0.257$, such that $|S\cap \llbracket 1,n \rrbracket| > Cn^{1/4}$ for all $n \in \mathbb{N}^*$. Moreover 
$$\limsup {\frac{|S\cap \llbracket 1,n \rrbracket|}{n^{1/3}}} > 0.$$
\end{theorem}

\begin{proof}
   Let $n \in \mathbb{N}^*$ and $\{s_k\}_{k \in \mathbb{N}}$ the elements of $S$ that are greater than $n$. Since $S$ is as Sidon set, for all $k \geq 0$, we have
$$s_k \geq n+\frac{k(k+1)}{2} .$$
Which gives
$$ \sum_{k=0}^{\infty}{\frac{1}{s_k}} \leq   \sum_{k=0}^{\infty}\frac{1}{n+\frac{k(k+1)}{2}}\leq \int_{0}^{\infty}\frac{dt}{n+\frac{t^2}{2}}
  \leq  \frac{1}{n}\int_{0}^{\infty}\frac{dt}{1+\frac{t^2}{2n}}\leq  \frac{\pi}{\sqrt{2n}},$$
that is
\begin{equation}\label{majRestes} \nonumber
    \sum_{s \in S, s \geq n}{\frac{1}{s}} \leq \frac{\pi}{\sqrt{2n}}.
\end{equation}

Now let us remark the following fact. Let $n \in \mathbb{N}^*$, $S_n=S\cap \llbracket 1,n \rrbracket$ and $\alpha, c>0$.
We define Sidon sets $\tilde{S}_k$ by finite induction and $S'$ as follows : 
\begin{itemize}
\item {$\tilde{S}_0 = S_n$.} 
\item{Suppose $\tilde{S}_k$ is defined. If $A_k=\{j \leq c n^{\alpha} \ : \ \tilde{S}_k \cup \{j\} \text{ \ is a Sidon set}\}$ is non empty, then $\tilde{S}_{k+1} = \tilde{S}_k \cup \{\min A\}$. If $A$ is empty, the recursion ends, and we set $S' = \tilde{S}_k \setminus S$}.
\end{itemize}

$S'$ satisfies the following properties. 
\begin{enumerate}
\item{$S'\subset \llbracket 1,cn^\alpha \rrbracket$}
\item{$\llbracket 1,cn^\alpha \rrbracket \subset (S_n\cup S')+(S_n\cup S')-(S_n\cup S'$).}
\end{enumerate}
Indeed, 1 is obvious and 2 is because if it's not true, it would contradict the fact that the algorithm ended, because at least one more element could have been added to $S'$. It follows from 2 that :

\begin{equation}\label{minSn} \nonumber
    |S_n|+|S'| \geq c^{1/3}n^{\alpha/3}.
\end{equation}
But since $S \cup S'$ is a Sidon set, we have
$$\sum_{s \in S_n \cup S'}{\frac{1}{s}} \leq \sum_{s \in S}{\frac{1}{s}}, $$
and as $S_n\cap S'=\varnothing$
$$\sum_{s \in S'}{\frac{1}{s}} \leq \sum_{s \in S, s \geq n}{\frac{1}{s}},$$
so by \eqref{majRestes}
\begin{equation}\label{haut} \nonumber
    \sum_{s \in S'}{\frac{1}{s}} \leq \frac{\pi}{\sqrt{2n}}. 
\end{equation}
But on the other hand
$$\sum_{s \in S'}{\frac{1}{s}}\geq \sum_{k=\max S'-|S'|}^{\max S'}{\frac{1}{s}}\geq \int_{cn^\alpha-|S'|}^{cn^\alpha}\frac1t \text{d}t\geq -\ln\left( 1-\frac{|S'|}{cn^\alpha}\right)\geq \frac{|S'|}{cn^\alpha}.$$
Combined with \eqref{haut}, this gives
$$|S'| \leq \frac{\pi}{\sqrt{2}} c n^{\alpha-1/2}, $$
and so by \eqref{minSn}
$$ |S_n| \geq c^{1/3}n^{\alpha/3}-\frac{\pi}{\sqrt{2}} c n^{\alpha-1/2}.$$
In order for the $\alpha/3$ exponent to be dominant, one has to choose $\alpha$ such that $\alpha/3 \geq \alpha-1/2$, ie $\alpha \leq 3/4$. We thus choose $\alpha=3/4$ and $c= \frac{2^{3/4}}{(3\pi)^{3/2}}$ to obtain
$$|S_n| \geq C n^{1/4} , $$
with $C> 0.257$ for all $n\in\N^*$.

Let us now prove that $\limsup{\frac{|S(n)|}{n^{1/3}}}>0$. More precisely, we will show that this limsup is greater than $6^{-1/3}$. Let $d<6^{-1/3}$ , $N \in \mathbb{N^*}$ and suppose that $|S(n)| \leq dn^{1/3}$ for all $n\geq N$. Let's build another Sidon set $\tilde{S}$ using a greedy algorithm :
\begin{itemize}
    \item Initialise $\tilde{S}_0 = S_N$.
    \item Suppose $\tilde{S}_k$ is constructed. We now take $\tilde{s}_k = \min \{p \in \mathbb{N}^*\setminus S_N :  \{p\} \cup \tilde{S}_k \ \text{is a Sidon set} \}$, and set $\tilde{S}_{k+1} = \tilde{S}_k \cup \{\tilde{s}_k\}$.
\end{itemize}
Finally, set $\tilde{S} = \bigcup_{k\in \mathbb{N}}{\tilde{S_k}}$. Since 
\begin{eqnarray} \nonumber
|\tilde{S}_k+\tilde{S}_k-\tilde{S}_k| \leq |\tilde{S}_k|^3,
\end{eqnarray}
we cannot have $\tilde{S}_k+\tilde{S}_k-\tilde{S}_k = \llbracket 1,|\tilde{S}_k|^3+1\rrbracket$ and thus we must have 
$$\tilde{s}_{k+1} \leq |\tilde{S}_k|^3+1 \leq (k+|S_N|)^3+1, $$
for all $k\in\mathbb{N}$. Now, by definition of $S$, we have
$$\sum_{s \in S}{\frac{1}{s}}\geq \sum_{s \in \tilde{S}}{\frac{1}{s}}, $$
and therefore by simplifying the elements of $S_N$, we get
$$  \sum\limits_{\substack{s \in S\\ s > N}}{\frac{1}{s}} \geq \sum_{k=1}^{+\infty} {\frac{1}{(k+|S_N|)^3+1}}\geq \sum_{k=1+\lfloor d N^{1/3}{\rfloor}}^{+\infty} {\frac{1}{k^3+1}}\nonumber \geq \int_{2+\lfloor d N^{1/3}{\rfloor}}^{+\infty}{\frac{dt}{t^3}} \nonumber \geq {\frac{1}{(4+2d N^{1/3})^2}}  .$$
But on the other hand, by Abel summation 
\begin{align*}
     \sum\limits_{\substack{s \in S\\ s > N}}{\frac{1}{s}}  &=\sum_{k=N+1} {\frac{|S(k)|-|S(N)|}{k(k+1)}}\leq\sum_{k=N+1}^{+\infty} {\frac{|S(k)|}{k(k+1)}}\leq d\sum_{k=N+1}^{+\infty} {\frac{k^{1/3}}{k(k+1)}} \\ 
& \leq d\int_{N}^{+\infty}{\frac{dt}{t^{5/3}}}\leq \frac{3d}{2N^{2/3}}.
\end{align*}
Finally we have $\frac{1}{4d^2N^{2/3}}(1-o(1)) \leq \frac{3d}{2N^{2/3}}$ which gives $6^{-1/3}(1-o(1)) \leq d$ and leads to absurdity. This concludes the proof.
\end{proof}

\begin{rmq}
    Here we have only given properties of Sidon sets maximizing $\sum\frac{1}{s}$, but in Section \ref{sectionMainResult} we showed the existence of Sidon sets maximizing many different functionals. Several pieces of information can be extracted from these Sidon sets by playing on the choice of the functional they maximize. Unfortunately, we were unable to extract sufficiently good density information to improve Ruzsa's result \cite{Ruzsa}.
\end{rmq}

\section{Integrability of functions in $\mathfrak{S}$}

In Section \ref{sectionMainResult}, we prove Theorem \ref{MainResult} from Theorem \ref{mainResult} by expressing the mappings $\sum_{s\in S}s^{-\alpha}$ in terms of scalar products on $\mathfrak{S}$. It is therefore legitimate to be interested in the integrability of functions in $\mathfrak{S}$. 

\begin{theorem} Let $\alpha<2$, we have $\mathfrak S \subset \mathcal{L}^{\alpha}(]0,1[)$, i.e. 
$$\int_{0}^{1}f(t)^{\alpha} \text{d}t < +\infty,$$
for all $f\in\mathfrak S$.
\end{theorem}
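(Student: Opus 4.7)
The plan is to leverage the pointwise upper bound on elements of $\mathfrak{S}$ that was already established during the proof of Theorem \ref{mainResult}, namely
$$g(t) \leq \frac{\sqrt{2t}}{\sqrt{1-t}}, \qquad t\in\,]0,1[,\ g\in\mathfrak{S}.$$
This bound encodes exactly the right behavior at both endpoints of the interval, so the $\mathcal{L}^\alpha$ question reduces immediately to the integrability of an elementary Beta-type kernel. No new ideas beyond what is already in Section~\ref{sectionMainResult} should be required.

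First I would recall the bound above (a two-line consequence of $G(g)\in\mathcal{A}$ together with $G(g)(t)\leq t/(1-t)$). Then, treating the case $\alpha>0$ (the case $\alpha=0$ being trivial), I would raise to the $\alpha$-th power to obtain
$$g(t)^{\alpha} \leq 2^{\alpha/2}\,\frac{t^{\alpha/2}}{(1-t)^{\alpha/2}}.$$
Integrating over $]0,1[$ gives, up to the constant $2^{\alpha/2}$, a Beta integral $B(\alpha/2+1,\,1-\alpha/2)$. Near $t=0$ the integrand is $O(t^{\alpha/2})$, which is integrable for any $\alpha>-2$; near $t=1$ it behaves like $(1-t)^{-\alpha/2}$, which is integrable precisely when $\alpha/2<1$. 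Both conditions hold under the hypothesis $0<\alpha<2$, so $\int_0^1 g(t)^\alpha\,dt<+\infty$ as required.

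There is no serious obstacle. The only point worth flagging is that the threshold $\alpha<2$ is genuinely the right one: the bound $g(t)^2\leq 2t/(1-t)$ is essentially saturated (up to the $-g(t^2)$ correction), so no sharper decay at $t=1$ can be extracted uniformly over $\mathfrak{S}$, and any $g$ realized by a Sidon set with reciprocal sum close to the DDC will push the $(1-t)^{-1/2}$ behavior. If one wished to extend the statement to include $\alpha\in\,]-2,0]$, one would need instead a lower bound on $g$, which is not automatic; but for the standard $\mathcal{L}^\alpha$ framework with $\alpha>0$ the argument above is sufficient.
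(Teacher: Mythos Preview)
Your proof is correct and follows essentially the same route as the paper: use the pointwise bound $g(t)\le \sqrt{2t}/\sqrt{1-t}$ established in Theorem~\ref{mainResult}, raise it to the power~$\alpha$, and observe that the resulting majorant is integrable on $]0,1[$ precisely when $\alpha<2$. Your version is in fact slightly more careful than the paper's (you check both endpoints, recognize the Beta integral, and flag the sign restriction on~$\alpha$), but the argument is the same.
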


\begin{proof}
Let $ \alpha <2$, $f \in \mathfrak{S}$ and $t\in ]0,1[$, we have 
$$0 \leq f(t) \leq \frac{2}{\sqrt{1-t}},$$
so
$$0 \leq f(t)^{\alpha} \leq \frac{2}{(1-t)^{\alpha/2}} ,$$
which justifies the integrability of $f^{\alpha}$.
\end{proof}
Unfortunately, this result cannot be extended to $\alpha=2$.

\begin{theorem}\label{pasL2}
There is $S\in\mathcal S$ such that
$$ \int_{0}^{1}{f_S(t)^2 \text{d}t} = +\infty.$$
In other words, $\mathfrak{S} \not \subset \mathcal{L}^2(]0,1[)$.
\end{theorem}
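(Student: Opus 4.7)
The plan is to construct the required Sidon set as a disjoint union $S=\bigsqcup_{k\geq 1}B_k$ of finite Sidon blocks placed in very widely separated intervals, arranged so that each block contributes at least a fixed positive amount to $\int_0^1 f_S(t)^2\,dt$, making the total diverge.

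I would place $B_k$ inside the interval $[M_k,2M_k]$ with $(M_k)$ growing very fast and $|B_k|\sim\sqrt{M_k}$; translating any asymptotically maximal Sidon set in $[1,M_k]$ supplies such a block. For any Sidon set $B\subset[M,2M]$ of cardinality $\kappa$, the AM--HM inequality applied to the $\kappa^2$ positive quantities $b+b'+1$ gives
$$\int_0^1 f_B(t)^2\,dt=\sum_{b,b'\in B}\frac{1}{b+b'+1}\geq\frac{\kappa^4}{\kappa^2(4M+1)}=\frac{\kappa^2}{4M+1},$$
which is bounded below by a positive constant $c>0$ as soon as $\kappa\sim\sqrt{M}$. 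Since $f_{B_k}\geq 0$ on $[0,1]$, every cross term in the expansion of $f_S^2=\bigl(\sum_k f_{B_k}\bigr)^2$ is nonnegative, hence
$$\int_0^1 f_S(t)^2\,dt\geq\sum_{k\geq 1}\int_0^1 f_{B_k}(t)^2\,dt\geq\sum_{k\geq 1}c=+\infty,$$
provided the blocks can be chosen so that $S$ is Sidon.

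I would ensure the Sidon property by induction on $k$. Given that $S_k:=B_1\cup\cdots\cup B_k$ is Sidon, pick $M_{k+1}$ much larger than $2\max S_k$. With this gap, every potential collision $s_1+s_2=s_3+s_4$ involving elements from both $S_k$ and $B_{k+1}$ is ruled out by comparing magnitudes, except for the single type $s+b=s'+b'$ with $s\neq s'$ in $S_k$ and $b\neq b'$ in $B_{k+1}$, which forces a nonzero difference in $S_k-S_k$ to equal a nonzero difference in $B_{k+1}-B_{k+1}$. To suppress these, I start from a Sidon set $P\subset[M_{k+1},2M_{k+1}]$ of size about $2\sqrt{M_{k+1}}$ and, for each $d\in(S_k-S_k)\setminus\{0\}$, delete one element from the (at most one, by the Sidon property of $P$) pair realising the difference $d$. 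This removes at most $|S_k|^2$ elements, so choosing $M_{k+1}$ large enough compared to $|S_k|$ leaves $|B_{k+1}|\sim\sqrt{M_{k+1}}$ and keeps the AM--HM lower bound above $c$.

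The main obstacle is precisely this pruning step: one needs the number of forbidden differences inherited from $S_k$ to be bounded by a quantity depending only on the already-constructed blocks, while $M_{k+1}$ remains free to be chosen afterwards; this decoupling is what allows the Sidon density in the new block to survive. Once the induction is set up correctly, the conclusion $\int_0^1 f_S(t)^2\,dt=+\infty$ follows at once from the displayed chain of inequalities.
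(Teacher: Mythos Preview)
Your overall strategy is sound and genuinely different from the paper's, but there is a gap in the Sidon-preservation step. You assert that once $M_{k+1}>2\max S_k$, the only collision type not excluded by magnitude is $s+b=s'+b'$ with $s,s'\in S_k$ and $b,b'\in B_{k+1}$. This overlooks the case with exactly one element from $S_k$: an equation $s+b_2=b_3+b_4$ with $s\in S_k$ and $b_2,b_3,b_4\in B_{k+1}$ is \emph{not} ruled out by size, since $b_3+b_4-b_2$ ranges over all of $[0,3M_{k+1}]$ and can land in $S_k$. (Concretely, if $1\in S_k$ and $M_{k+1},\,2M_{k+1}-1\in B_{k+1}$, then $1+(2M_{k+1}-1)=M_{k+1}+M_{k+1}$.) The repair is easy: such a collision forces $b_3+b_4-b_2\le\max S_k$, hence $b_3,b_4\in[M_{k+1},M_{k+1}+\max S_k]$ and $b_2\in[2M_{k+1}-\max S_k,2M_{k+1}]$; deleting from $P$ the $O(\sqrt{\max S_k})$ Sidon elements lying in these two short boundary strips, in addition to your difference-pruning, removes this case without disturbing $|B_{k+1}|\sim\sqrt{M_{k+1}}$. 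A minor slip: a Sidon set in an interval of length $M_{k+1}$ has at most $(1+o(1))\sqrt{M_{k+1}}$ elements, not about $2\sqrt{M_{k+1}}$; but any fixed positive constant in front of $\sqrt{M_{k+1}}$ is enough for your AM--HM lower bound, so this is harmless.

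For comparison, the paper argues non-constructively. It takes any Sidon set $S$ with $\limsup S(n)/\sqrt n>0$ (whose existence it cites), notes that then $S+S$ has positive upper density so $\sum_{\sigma\in S+S}1/\sigma=+\infty$, and finishes via the identity $f_{S+S}(t)=\tfrac12\bigl(f_S(t)^2+f_S(t^2)\bigr)$, which gives $\int_0^1 f_S^2\ge 2\int_0^1 f_{S+S}-\int_0^1 f_S=+\infty$. Your block construction is in effect a hands-on rebuild of such a dense Sidon set, combined with a different (and pleasant) way of extracting the divergence directly through AM--HM on each block; it is more self-contained, while the paper's route is shorter because it outsources the existence of a Sidon set with $\limsup S(n)/\sqrt n>0$ to the literature.
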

The proof of this theorem is based on the following lemma.

\begin{lem}\label{div}
    Let $A \subset \mathbb{N}$ such that $\limsup \frac{|A\cap \llbracket1,n\rrbracket|}{n} >0$, then
$$\sum_{a \in A}{\frac{1}{a}} = + \infty.$$
\end{lem}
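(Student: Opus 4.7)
The plan is to exploit positive upper density to extract, in a very sparse subsequence of scales, dyadic-type windows where $A$ has positive proportion, and then sum the reciprocals window by window.

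Concretely, I would start by picking some $\delta>0$ strictly smaller than $\limsup |A\cap\llbracket 1,n\rrbracket|/n$ and writing $A(n)=|A\cap\llbracket 1,n\rrbracket|$. By definition of $\limsup$, the inequality $A(n)>\delta n$ holds for infinitely many $n$, so I can extract a subsequence $(n_k)_{k\geq 1}$ with $A(n_k)>\delta n_k$ and satisfying the fast-growth condition $n_{k+1}>\tfrac{2}{\delta}n_k$. This growth is arranged precisely so that the half-open intervals $I_k:=(\tfrac{\delta n_k}{2},n_k]$ are pairwise disjoint.

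The next step is a crude counting estimate on each window. Since $A(\tfrac{\delta n_k}{2})\leq \tfrac{\delta n_k}{2}$, we obtain
$$|A\cap I_k|=A(n_k)-A(\tfrac{\delta n_k}{2})>\delta n_k-\tfrac{\delta n_k}{2}=\tfrac{\delta n_k}{2}.$$
Every element of $A\cap I_k$ is bounded above by $n_k$, so its reciprocal is at least $1/n_k$, giving
$$\sum_{a\in A\cap I_k}\frac{1}{a}\geq \frac{|A\cap I_k|}{n_k}>\frac{\delta}{2}.$$
Since the $I_k$ are disjoint subsets of $\mathbb{N}$, summing over $k$ yields $\sum_{a\in A}\frac{1}{a}\geq\sum_{k\geq 1}\frac{\delta}{2}=+\infty$, which is the desired conclusion.

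There is no serious obstacle here, the only subtle point being to pick the subsequence sparse enough (the factor $2/\delta$ above) so that the windows in which we harvest contributions do not overlap. An alternative would be an Abel-summation argument on $\sum_{a\leq N}1/a$, but the window extraction above is more self-contained and avoids having to control the full partial sums of $A(n)/(n(n+1))$, which is only known to be large along a subsequence.
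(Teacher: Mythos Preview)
Your proof is correct. The paper takes a slightly different route: it first applies an Abel transformation to write $\sum_{k=1}^{n}\mathbbm{1}_A(k)/k=\sum_{k=1}^{n}A(k)/(k(k+1))+A(n)/n$, and then argues that for each $n_k$ with $A(n_k)\geq(\alpha/2)n_k$ a slice of the transformed series over an interval of the form $[(1-c\alpha)n_k,n_k]$ is bounded below by a fixed positive constant, so the Cauchy criterion forces divergence. Your direct window argument bypasses Abel summation entirely, at the small price of thinning the subsequence so that the intervals $(\delta n_k/2,n_k]$ are pairwise disjoint and the contributions can simply be added. Both proofs harvest a uniform positive contribution from a neighbourhood of each $n_k$; yours is more self-contained and avoids the summation-by-parts machinery, while the paper's approach does not need to arrange disjointness because the Cauchy criterion only requires the slices to start arbitrarily far out. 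Your closing remark about Abel summation is essentially a description of what the paper actually does.
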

\begin{proof}
Let $A \subset \mathbb{N}$, and let us denote  
$$\alpha  = \limsup \frac{|A \cap\llbracket1,n \rrbracket|}{n}.$$
By definition of  $\alpha$, there is a sequence $(n_k)_{k\in\mathbb{N}}$ such that $|A \cap \llbracket 1,n_k \rrbracket| \geq \frac{\alpha}{2} n_k$ for all $k \in \mathbb{N}$. By an Abel transformation, one has 
$$\sum_{a \in A}{\frac{1}{a}}=\sum_{k=1}^{n}\frac{\mathbbm{1}_{A}(k)}{k} = \sum_{k=1}^{n}\frac{|A\cap \llbracket 1,k\rrbracket|}{k(k+1)}+ \frac{A(n)}{n}, $$
but
$$\sum_{l=(1-\alpha/4)n_k}^{n_k}\frac{|A\cap \llbracket 1,l\rrbracket|}{l(l+1)} \geq \sum_{l=(1-\alpha/2)n_k}^{n_k}\frac{n_k}{2l(l+1)} \geq \alpha/4 .$$
Since these Cauchy slices don't converge to 0, the series can't converge, and since all of its terms are positive, it diverges.
\end{proof}
We can now prove Theorem \ref{pasL2}.
\begin{proof}
   We will in fact prove that every Sidon set $S$ that achieves $\limsup{\frac{S(n)}{\sqrt{n}}}>0$ satisfies
   $$\int_{0}^{1}{f_S(t)^2 dt} = +\infty.$$
Let $S \in \mathcal{S}$ be such a Sidon set (see \cite{HalberstamRoth} p.89  for proof of their existence), we have
$$ \limsup \frac{S(n)^2+S(n)}{2n} >0,$$
so 
$$\limsup \frac{(S+S)(n)}{n} >0$$
thus $ \sum_{\sigma \in S+S} \frac{1}{\sigma} = +\infty$ by Lemma \ref{div}. But by Fubini's theorem
$$ \int_{0}^{1} {f_{S+S}(t)dt} =\int_{0}^{1} \sum_{\sigma \in S+S} {t^{\sigma}dt}=\sum_{\sigma \in S+S}\frac{1}{\sigma+1}\geq \sum_{\sigma \in S+S}\frac{1}{\sigma}-1=+\infty,$$
where the inequality holds because, writing $S+S=(\sigma_n)_{n\in\N^*}$, we have $\sigma_n+1\leq \sigma_{n+1}$ and $\sigma_1\geq 1$.
Finally we have shown (see the proof of theorem \ref{mainResult}) that for all $t \in ]0,1[$
$$  f_{S+S}(t) = \frac{f_S(t)^2+f_S(t^2)}{2}.$$
Therefore
$$  f_{S+S}(t) \leq \frac{f_S(t)^2+f_S(t)}{2},$$
and by positivity
$$\int_{0}^{1} {f_{S}^2(t)dt}\geq 2\int_{0}^{1} {f_{S+S}(t)dt}-\int_{0}^{1} {f_{S}(t)dt}\geq 2\int_{0}^{1} {f_{S+S}(t)dt}-\mathrm{DDC}=+\infty.$$
\end{proof}
We will now prove the following corollary, which extends Theorem \ref{MainResult} to all $\alpha\leq 1/2$.
\begin{cor}\label{alpha<1/2}
    Let $\alpha\leq 1/2$, there exists an infinite Sidon set $S$ such that
    $$\sum_{s \in S} {\frac{1}{s^\alpha}} = + \infty.$$
\end{cor}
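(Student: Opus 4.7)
The plan is to reuse the very Sidon set $S$ exhibited in the proof of Theorem \ref{pasL2}, namely one satisfying $\limsup S(n)/\sqrt n >0$, and to show that for this $S$ one already has $\sum_{s\in S} s^{-1/2}=+\infty$. This settles the whole corollary: when $0<\alpha\leq 1/2$ the inequality $s^{-\alpha}\geq s^{-1/2}$ for $s\geq 1$ propagates the divergence, and when $\alpha\leq 0$ the series diverges trivially since $S$ is infinite.

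For the critical case $\alpha=1/2$, I would combine the AM--GM inequality with the Sidon bijection. Writing $S=(s_i)_{i\geq 1}$ and using $\sqrt{s_is_j}\leq(s_i+s_j)/2$, one has
$$\left(\sum_{s\in S}\frac{1}{\sqrt s}\right)^2=\sum_{i,j\geq 1}\frac{1}{\sqrt{s_is_j}}\geq 2\sum_{i,j\geq 1}\frac{1}{s_i+s_j}.$$
By the Sidon property, the map $(i,j)\mapsto s_i+s_j$ is a bijection from $\{(i,j):i\leq j\}$ onto $S+S$, so splitting the double sum into its diagonal and off-diagonal parts gives
$$\sum_{i,j\geq 1}\frac{1}{s_i+s_j}=2\sum_{\sigma\in S+S}\frac{1}{\sigma}-\frac12\sum_{s\in S}\frac{1}{s}.$$
The subtracted term is bounded above by $DDC/2<+\infty$, while the first term is infinite by the proof of Theorem \ref{pasL2} (which establishes $\sum_{\sigma\in S+S}1/\sigma=+\infty$ via Lemma \ref{div} applied to $S+S$). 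Hence $\left(\sum_{s\in S}s^{-1/2}\right)^2=+\infty$, and the conclusion follows.

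There is no serious obstacle: the corollary is essentially a repackaging of the divergence $\sum_{\sigma\in S+S}1/\sigma=+\infty$ that was already extracted in the proof of Theorem \ref{pasL2}, with AM--GM serving as the bridge from a sumset estimate back to an estimate on $S$ itself at the exponent $1/2$. The only mild technicality is to verify that the cancellation $\tfrac12\sum_{s\in S}1/s$ above remains finite, which is automatic from $S\in\mathcal S$.
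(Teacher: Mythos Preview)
Your proof is correct and takes a genuinely different route from the paper's.

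Both arguments reduce to $\alpha=1/2$ and both ultimately rest on the fact, established inside the proof of Theorem \ref{pasL2}, that for a Sidon set $S$ with $\limsup S(n)/\sqrt n>0$ one has $\sum_{\sigma\in S+S}1/\sigma=+\infty$. From there the paths diverge. The paper stays within its analytic framework: it uses the pointwise bound $f_S(t)\leq \sqrt{2}/\sqrt{1-t}$ to dominate $\int_0^1 f_S(t)^2\,dt$ by $\sqrt{2}\sum_{s\in S}\int_0^1 t^s(1-t)^{-1/2}\,dt$, then identifies each integral as a Wallis integral asymptotic to $\tfrac{\sqrt\pi}{2\sqrt s}$ to conclude. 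Your argument is purely arithmetic: the AM--GM bound $1/\sqrt{s_is_j}\geq 2/(s_i+s_j)$ together with the Sidon bijection $\{i,j\}\mapsto s_i+s_j$ converts $(\sum_s s^{-1/2})^2$ directly into (a constant times) $\sum_{\sigma\in S+S}1/\sigma$ minus the finite quantity $\tfrac12\sum_s 1/s\leq DDC/2$. The identity you write, $\sum_{i,j}1/(s_i+s_j)=2\sum_{\sigma\in S+S}1/\sigma-\tfrac12\sum_s 1/s$, checks out (split into diagonal and off-diagonal and use injectivity of the sum map on unordered pairs). What you gain is a shorter, integral-free argument that isolates exactly which combinatorial fact drives the divergence; what the paper's version gains is consistency with the generating-function machinery used throughout, and it does not need the Sidon bijection at this step.
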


\begin{proof}
    Trivially, we have $\sum_{s \in S} {\frac{1}{s^\alpha}}\geq \sum_{s \in S} {\frac{1}{\sqrt{s}}}$ for all $\alpha\leq 1/2$, so we just need to prove the result for $\alpha=1/2$. By Theorem \ref{pasL2}, there is $S\in\mathcal S$ such that
$$ \int_{0}^{1}{f_S(t)^2 \text{d}t} = +\infty.$$
But for all $t\in ]0,1[$ we have $f_S(t)\leq \frac{\sqrt{2}}{\sqrt{1-t}}$ (see the proof of Theorem \ref{mainResult}). Thus
$$\int_{0}^{1}{f_S(t)^2 \text{d}t}\leq \sqrt{2} \int_{0}^{1}{\frac{f_S(t)}{\sqrt{1-t}} \text{d}t}=\sqrt{2} \int_{0}^{1}{\sum_{s\in S}\frac{t^s}{\sqrt{1-t}} \text{d}t}=\sqrt{2} \sum_{s\in S}\int_{0}^{1}\frac{t^s}{\sqrt{1-t}} \text{d}t,$$
by Fubini's theorem and positivity. So $\sum_{s\in S}\int_{0}^{1}\frac{t^s}{\sqrt{1-t}} \text{d}t$ is a divergent series. On the other hand, the substitution $t = \sin^2(x)$, gives
$$\int_{0}^{1}\frac{t^s}{\sqrt{1-t}} \text{d}t=2\int_{0}^{\pi/2}\sin(x)^{2s+1} \text{d}t\sim \frac{\sqrt\pi}{2\sqrt{s}},$$
by a classical result on Wallis integrals. So finally $\sum_{s \in S} {\frac{1}{\sqrt s}} = + \infty$, which concludes the proof.
\end{proof}

\section{Approximation by finite Sidon sets}

Another consequence of Theorem \ref{mainResult} is the following: with respect to the topology of \( \mathcal{O}(\D) \), the subset of \( \mathfrak{S} \) consisting of the generating functions of finite sets is dense. For each \( n \in \mathbb{N}^* \), let \( \mathfrak{S}_n \) denote the set of generating functions of the Sidon sets of \( \llbracket 1, n \rrbracket \). We thus obtain the following result.

\begin{theorem}
Let \( \phi \) be a continuous function on \( \mathfrak{S} \) endowed with the topology induced from the topology of \( \mathcal{O}(\D) \), taking values in \( \mathbb{R}^+ \).
Then
\[
\max_{f \in \mathfrak{S}} \phi(f) = \lim_{n \to \infty} \max_{f \in \mathfrak{S}_n} \phi(f).
\]
\end{theorem}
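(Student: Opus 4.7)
The plan is a monotonicity-plus-approximation argument. First I note the natural chain of inclusions $\mathfrak{S}_n \subset \mathfrak{S}_{n+1} \subset \mathfrak{S}$: a Sidon subset of $\llbracket 1, n \rrbracket$ remains a Sidon subset of $\llbracket 1, n+1 \rrbracket$ and of $\mathbb{N}^*$. Because each $\mathfrak{S}_n$ is finite, the quantity $M_n := \max_{f \in \mathfrak{S}_n} \phi(f)$ is attained, and $(M_n)$ is a nondecreasing sequence bounded above by $M := \max_{f \in \mathfrak{S}} \phi(f)$. This upper maximum exists because Theorem \ref{mainResult} gives the compactness of $\mathfrak{S}$ and $\phi$ is continuous. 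Hence $\lim_{n \to \infty} M_n$ exists and satisfies $\lim_{n \to \infty} M_n \leq M$.

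For the reverse inequality I rely on truncation. Let $f_0 \in \mathfrak{S}$ attain $M$, and let $S_0 \subset \mathbb{N}^*$ be the associated Sidon set. For each $n$, the set $S_0^{(n)} := S_0 \cap \llbracket 1, n \rrbracket$ is a Sidon set contained in $\llbracket 1, n \rrbracket$ (any subset of a Sidon set is Sidon), so its generating function $f_n$ lies in $\mathfrak{S}_n$. On any closed disk $\{|z| \leq r\}$ with $r < 1$, the elementary tail estimate
$$|f_0(z) - f_n(z)| \leq \sum_{k>n} r^k = \frac{r^{n+1}}{1-r}$$
shows that $f_n \to f_0$ uniformly on compact subsets of $\mathbb{D}$, i.e.\ $f_n \to f_0$ in the topology of $\mathcal{O}(\mathbb{D})$. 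By continuity of $\phi$, $\phi(f_n) \to \phi(f_0) = M$; but $\phi(f_n) \leq M_n$ for each $n$, so passing to the limit gives $M \leq \lim_{n \to \infty} M_n$. Combined with the first inequality, this yields the desired equality.

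The argument has no real obstruction: both ingredients—compactness of $\mathfrak{S}$ (from Theorem \ref{mainResult}) and the fact that the topology of $\mathcal{O}(\mathbb{D})$ is uniform convergence on compacts—are already in hand, and the truncation $S_0 \cap \llbracket 1,n \rrbracket$ is the natural sequence of finite approximants. The only point that requires a moment of care is that $\phi$ is assumed merely continuous (not uniformly continuous), which is precisely why one wants the strong convergence $f_n \to f_0$ in $\mathcal{O}(\mathbb{D})$ rather than a weaker mode; truncating the power series furnishes exactly such convergence.
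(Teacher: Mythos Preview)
Your proof is correct and follows essentially the same approach as the paper: bound $\max_{f\in\mathfrak S_n}\phi(f)$ above by $\max_{f\in\mathfrak S}\phi(f)$ via the inclusion $\mathfrak S_n\subset\mathfrak S$, then truncate a maximizer $S_\phi$ to $S_\phi\cap\llbracket 1,n\rrbracket$ and use continuity of $\phi$ to obtain the reverse inequality. You add a bit more detail than the paper (the explicit tail estimate for convergence in $\mathcal O(\D)$ and the monotonicity of $(M_n)$ ensuring the limit exists), but the argument is the same.
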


\begin{proof}
    Since \( \mathfrak{S}_n \subset \mathfrak{S} \), we have
\[
\forall n \in \mathbb{N}^*, \quad \max_{f \in \mathfrak{S}_n} \phi(f) \leq \max_{f \in \mathfrak{S}} \phi(f).
\]
On the other hand, let \( S_\phi \) be a Sidon set such that:
\[
\phi(f_{S_{\phi}}) = \max_{f \in \mathfrak{S}} \phi(f),
\]
which exists by Theorem \ref{mainResult}. Additionally, let \( S_{\phi,n} = S_{\phi} \cap \llbracket 1, n \rrbracket \). We then have
\[
\phi(f_{S_{\phi,n}}) \leq \max_{f \in \mathfrak{S}_n} \phi(f),
\]
and by the continuity of \( \phi \)
\[
\phi(f_{S_{\phi}}) = \lim_{n \to \infty} \phi(f_{S_{\phi,n}}).
\]
This concludes the proof.
\end{proof}

\section{New bounds for the DDC}\label{sectionMajDDC}

In this section, we show the following result.

\begin{theorem}
    We have $2.16150003<\mathrm{DDC}\leq 2.247307$.
\end{theorem}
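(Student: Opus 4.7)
The plan is to convert the sum $\sum_{s \in S} 1/s$ for a generic Sidon set $S$ into a sum over its counting function $A(n) := |S \cap \llbracket 1, n\rrbracket|$ by Abel summation. Since $A(n) = O(\sqrt n)$ (the Sidon condition forces $A(n)(A(n)-1) \leq 2n$), one has
\[
\sum_{s \in S} \frac{1}{s} = \sum_{n \geq 1} \frac{A(n)}{n(n+1)}.
\]
Writing $M(n)$ for the maximum cardinality of any Sidon subset of $\llbracket 1, n\rrbracket$, we obtain the universal upper bound $DDC \leq \sum_{n \geq 1} M(n)/(n(n+1))$; the remaining work is to make the right-hand side explicit.

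I would split the sum at a large threshold $N$. For the head $n \leq N$, I would plug in the tabulated exact values of $M(n)$ --- well documented in the literature on maximal Sidon subsets of short intervals --- and compute $\sum_{n \leq N} M(n)/(n(n+1))$ to high precision. For the tail $n > N$, I would apply Lindstr\"om's bound $M(n) \leq \sqrt n + n^{1/4} + 1$ and use monotone comparison with integrals:
\[
\sum_{n > N} \frac{\sqrt n + n^{1/4} + 1}{n(n+1)} \leq \int_{N}^{\infty} \frac{\sqrt t + t^{1/4} + 1}{t^2}\,dt = \frac{2}{\sqrt N} + \frac{4}{3\,N^{3/4}} + \frac{1}{N}.
\]
Adding the head and tail yields an explicit bound depending on $N$; numerical optimization of $N$ then pushes the total below $2.247307$. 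The improvement over Taylor's $2.24732646$ comes precisely from inserting Lindstr\"om's inequality (Taylor relied only on weaker Sidon/Levine-type lower bounds on the individual Sidon elements), which provides sharper control of the tail.

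The main obstacle is the numerical bookkeeping. The slow $O(1/\sqrt N)$ decay of the Lindstr\"om tail forces $N$ to be taken reasonably large, so a substantial table of exact values of $M(n)$ is required, together with a high-precision computation of the finite head sum. A secondary difficulty is that in the mid-range of $n$, neither the exact small-$n$ data nor the asymptotic Lindstr\"om estimate is individually sharp; one must take $M(n)$ to be the minimum of the best available bounds at each $n$ to extract the full strength of the method and to land the final constant below the target $2.247307$.
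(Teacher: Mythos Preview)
Your Abel-summation framework is sound and gives the valid inequality
\[
DDC \;\le\; \sum_{n\ge 1}\frac{M(n)}{n(n+1)}\;=\;\sum_{k\ge 1}\frac{1}{m_k},
\]
where $m_k$ is the least $n$ with $M(n)=k$, i.e.\ $m_k-1$ is the length of an optimal Golomb ruler with $k$ marks. The problem is numerical, and it is fatal for the target constant: the right-hand side is strictly larger than $2.247307$. Using the known optimal ruler lengths through order $28$ one already has $\sum_{k\le 28}1/m_k\approx 2.2292$, and since Singer's construction gives $m_k\le k^2-k+1$ (for $k-1$ a prime power, with nearby values handled by the next prime power), the tail satisfies $\sum_{k\ge 29}1/m_k>\sum_{k\ge 29}1/k^2\approx 0.0351$. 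Hence $\sum_k 1/m_k>2.26$. No amount of optimizing $N$ or tabulating $M(n)$ will help, because the loss is intrinsic: bounding $A(n)$ by $M(n)$ pointwise lets a \emph{different} extremal Sidon set witness each $n$, and no single set can do this simultaneously.

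The paper proceeds differently. It keeps the sum indexed by position, $\sum_n 1/s_n$, and reuses Taylor's earlier computation verbatim for the first two blocks (positions $1$--$20$ and $21$--$1099$); those bounds exploit that all $s_n$ come from one Sidon set and are not recoverable from $M(n)$ alone. The only new ingredient is a sharper tail estimate for positions $n\ge 1100$: Lindstr\"om's inequality is inverted to give $s_n>(n-\sqrt n)^2$, whence $\sum_{n\ge N}1/s_n<2/(N-\sqrt N)$, which at $N=1100$ yields $<0.000947$ and replaces Taylor's weaker tail bound. The improvement from $2.24732646$ to $2.247307$ comes entirely from this last step; the heavy lifting in the head and middle ranges is inherited from \cite{Taylor,Taylor2}, and your plan omits precisely that input.
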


For the upper bound, we adopt the method developed by Taylor \cite{Taylor}, who proposes dividing the series into three blocks. The first block, corresponding to the small indices, is bounded by the best sum we have for a finite Sidon set. The second block, corresponding to the middle indices, is bounded by a result established in \cite{Taylor2}. Finally, we bound the third block, which corresponds to remainder of the series. It is the bound on this last block that we aim to improve. Taylor cuts the first block after 20 terms, but any knowledge about the maximal reciprocal sum of Sidon sets of cardinality greater than 20 will further improve the result.

To bound the third block, Taylor use 
$$
\sum_{n=1100}^{\infty}1/s_n\leq\sum_{n=1100}^{\infty} \frac{1}{c(n-1) n / 2}=\frac{2 / c}{1100-1}=\frac{2 / c}{1099},
$$
where $c$ is a constant less than $1.9$ and $(s_n)$ is a Sidon set. Using Lindström's inequality \cite{Lindstrom}, we improve this step. This is the purpose of the following lemma.

\begin{lem}
    If $S=(s_n)$ is a Sidon set and $N\in\N^*$, we have 
    $$\sum_{n=N}^{\infty}1/s_n<\frac{2}{N-\sqrt{N}},$$
    and more specifically 
    $$\sum_{n=N}^{\infty}1/s_n<2\ln\left(1-\frac{1}{\sqrt{N}}\right)+\frac{2}{\sqrt{N}-1}.$$
\end{lem}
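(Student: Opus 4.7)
The plan is to extract from Lindström's inequality a clean pointwise lower bound on $s_n$, and then majorise the tail $\sum_{n\ge N}1/s_n$ by an integral admitting an explicit antiderivative. Specifically, Lindström's inequality $|S\cap\{1,\dots,m\}|\le\sqrt m+m^{1/4}+1$ applied at $m=s_n$ yields $n\le\sqrt{s_n}+s_n^{1/4}+1$, from which I would deduce the pointwise bound $s_n\ge(n-\sqrt n)^2$ by a case split on whether $s_n\le n^2$: in the first case $s_n^{1/4}\le\sqrt n$ and Lindström rearranges to $\sqrt{s_n}\ge n-\sqrt n$ (using the strict form of Lindström together with integrality of $n$ to absorb the $+1$); in the second case the bound is automatic since $n^2\ge(n-\sqrt n)^2$.

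Once the pointwise bound is secured, the sum is majorised by $\int_N^\infty dt/(t-\sqrt t)^2$ (the integrand being decreasing on $t>1$). The substitution $u=\sqrt t$, giving $dt=2u\,du$ and $t-\sqrt t=u(u-1)$, turns this into $\int_{\sqrt N}^\infty 2\,du/(u(u-1)^2)$, and the partial fraction decomposition $2/(u(u-1)^2)=2/u-2/(u-1)+2/(u-1)^2$ produces the antiderivative $2\ln(u/(u-1))-2/(u-1)$. Evaluating from $\sqrt N$ to infinity yields exactly $2\ln(1-1/\sqrt N)+2/(\sqrt N-1)$, which is the ``more specifically'' bound.

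The first (weaker) bound is then an immediate corollary via the elementary inequality $\ln(1-x)\le-x$ at $x=1/\sqrt N$: indeed $2\ln(1-1/\sqrt N)+2/(\sqrt N-1)\le-2/\sqrt N+2/(\sqrt N-1)=2/(\sqrt N(\sqrt N-1))=2/(N-\sqrt N)$. The main obstacle is the first step, namely cleanly upgrading ``$n\le\sqrt{s_n}+s_n^{1/4}+1$'' to ``$s_n\ge(n-\sqrt n)^2$'' without losing a unit in the square root. This is the delicate point where the strict form of Lindström (combined with integrality of $n$) must be leveraged, and it is plausibly the source of the strict inequalities ``$<$'' in the lemma's statement.
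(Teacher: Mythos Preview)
Your approach is essentially the paper's: Lindström yields $s_n>(n-\sqrt n)^2$, then the tail is majorised by $\int_N^\infty dt/(t-\sqrt t)^2$, the substitution $u=\sqrt t$ and the partial-fraction decomposition $\tfrac{2}{u(u-1)^2}=\tfrac{2}{u}-\tfrac{2}{u-1}+\tfrac{2}{(u-1)^2}$ give the sharper bound, and $\ln(1-x)\le -x$ gives the cruder one.

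The only discrepancy is precisely at the step you flag as delicate. Your proposed device---``strict Lindström plus integrality of $n$'' to absorb the $+1$---does not actually work: from $n<\sqrt{s_n}+s_n^{1/4}+1$ with $n$ an integer you cannot conclude $n\le\sqrt{s_n}+s_n^{1/4}$, since the right-hand side is not an integer. The paper sidesteps this by quoting Lindström with the sharper constant $\tfrac12$ and applying it directly at $m=\lfloor(n-\sqrt n)^2\rfloor$: the Sidon count in $\{1,\dots,m\}$ is then strictly below $(n-\sqrt n)+\sqrt{n-\sqrt n}+\tfrac12<(n-\sqrt n)+(\sqrt n-\tfrac12)+\tfrac12=n$, hence at most $n-1$, forcing $s_n>(n-\sqrt n)^2$ without any case split.
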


\begin{proof}

Lindström \cite{Lindstrom} showed that the cardinality of a Sidon set in the $n$ first integers is less than $\sqrt{n}+n^{1/4}+1/2$. So the cardinality of a Sidon set in the $\lfloor (n-\sqrt{n})^2\rfloor$ first integers is less than $n-1$. Thus the $n$th element of a Sidon set is necessarily strictly greater than $(n-\sqrt{n})^2$ and so we have
$$
\sum_{n=N}^{\infty}1/s_n\leq\sum_{n=N}^{\infty} \frac{1}{(n-\sqrt{n})^2}<\int_{N}^\infty \frac{1}{(x-\sqrt{x})^2}dx=2\int_{\sqrt{N}}^\infty \frac{1}{y(y-1)^2}dy.
$$
Then
\begin{align*}
    \int_{\sqrt{N}}^\infty \frac{1}{y(y-1)^2}dy & =\lim_{A\rightarrow \infty}\int_{\sqrt{N}}^A \frac{1}{x}dx-\int_{\sqrt{N}}^A \frac{1}{x-1}dx+\int_{\sqrt{N}}^A \frac{1}{(x-1)^2}dx \\ & =\ln((\sqrt{N}-1)/\sqrt{N})+1/(\sqrt{N}-1) \\ & \leqslant \frac{-1}{\sqrt{N}}+\frac{1}{\sqrt{N}-1}=\frac{1}{N-\sqrt{N}}.
\end{align*}
The last two lines prove the theorem.
\end{proof}
Applying this lemma to $N=1100$, we find
$$
\sum_{n=1100}^{\infty}1/s_n<0.000947,
$$
and finally $\mathrm{DDC}\leq 2.247307$, which proves the upper bound of Theorem \ref{majDDC}. The lower bound, as explained in the introduction, is obtained by completing $K$, the Sidon set of 1010 elements discovered by L. Kleinwaks, using the greedy algorithm. Since $\max K=13655199=K_m$, $K\cup\left\lbrace 2^kK_m \ : \ k\geqslant 1\right\rbrace$ is a Sidon set, and we have
$$\sum_{k\geqslant 1}\frac{1}{2^kK_m}=\frac{1}{K_m}>7.3\times 10^{-8} .$$
Therefore, $\mathrm{DDC}>S_K+7.3\times 10^{-8}>2.16150003$ which concludes the proof of Theorem \ref{majDDC}.

\begin{rmq}
     In a recent breakthough, Balogh, Füredi, and Roy \cite{BaloghFurediRoy} showed that the cardinality of a Sidon set in the $n$ first integers is less than $\sqrt{n}+0.998n^{1/4}+O(1)$. This was further improved by Carter, Hunter, and O’Bryant \cite{CarterHunterOBryant}, which showed that is less than $\sqrt{n}+0.98183n^{1/4}+O(1)$. These bounds are more precise than Lindström's, but the $O(1)$ terms would need to be made explicit in order to be used here.
\end{rmq}

\section{Other patterns}\label{SectionOtherPatterns}

In this section, we show how our method can be adapted to other constraint equations. We'll use the same notations as in Section \ref{sectionMainResult}. Recall that for all $B\subset\mathbb{N}$, we denote
$$f_B(z)=\sum_{n=0}^\infty{\mathbbm{1}_B(n)z^n},$$
and
$$\mathcal{A}=\left\{ \sum_{n=0}^\infty{ a_n z^n}\in\mathcal{O}(\D) \ : \ a_i\in\{0,1\} \ \forall i\in\N \right\}.$$

\subsection{$B_2[g]$ sequences}

First of all, we can extend Theorem \ref{MainResult} to $B_{2}[g]$. 

\begin{theorem}
Let $g\geq 2$ and $\alpha> \frac{1}{2}$, there exists $B_\alpha\in B_2[g]$ such that
    $$\sum_{b\in B_\alpha}\frac{1}{b^\alpha}=\sup\left\{ \sum_{b\in B} b^{-\alpha} \ : \ B\in B_2[g]\right\}.$$
\end{theorem}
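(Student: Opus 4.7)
The plan is to mirror the two-step strategy of Section \ref{sectionMainResult}: I first prove a $B_2[g]$-analogue of Theorem \ref{mainResult}, and then deduce the present statement by the same $\Gamma$-function identity that yielded Theorem \ref{MainResult}.

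For the compactness step, I introduce
$$\mathcal{A}_g := \left\{\sum_{n=0}^\infty a_n z^n \in \mathcal{O}(\D) \ : \ a_i \in \{0,1,\ldots,g\}\ \forall i \in \N\right\},$$
which is sequentially compact --- hence compact --- by the same diagonal-extraction argument used on $\mathcal{A}$. With the operator $G(f)(z) = \frac{f(z)^2 + f(z^2)}{2}$ of Section \ref{sectionMainResult}, the coefficient of $z^n$ in $G(f_B)$ counts the number of representations $n = b_i + b_j$ with $b_i \le b_j$ and $b_i,b_j \in B$, so $B \in B_2[g]$ is precisely the condition $G(f_B) \in \mathcal{A}_g$. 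Setting
$$\mathfrak{B}_g := \mathcal{A} \cap G^{-1}(\mathcal{A}_g),$$
one has $B \in B_2[g] \Leftrightarrow f_B \in \mathfrak{B}_g$; since $G$ is continuous, $\mathfrak{B}_g$ is a closed subset of the compact set $\mathcal{A}$, hence itself compact.

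Next, for every $\varphi \in \mathfrak{B}_g$ and $t \in \,]0,1[$, the inequality $G(\varphi)(t) \leq \sum_{n=1}^\infty g\, t^n = \frac{gt}{1-t}$ yields
$$\varphi(t)^2 \leq \frac{2gt}{1-t} - \varphi(t^2) \leq \frac{2gt}{1-t},$$
so $\varphi(t) \leq \sqrt{2g}\cdot\frac{\sqrt{t}}{\sqrt{1-t}}$. Since $f_\alpha(u) = \frac{(-\ln u)^{\alpha-1}}{u}$ satisfies $\int_0^1 \frac{|f_\alpha(t)|\sqrt{t}}{\sqrt{1-t}}\,dt < +\infty$ for $\alpha > 1/2$, dominated convergence shows that $\varphi \mapsto \int_0^1 \varphi(t) f_\alpha(t)\,dt$ is bounded and sequentially continuous on the metrizable compact $\mathfrak{B}_g$, hence continuous, and therefore attains its supremum at some $\varphi_0 \in \mathfrak{B}_g$.

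Finally, the computation carried out in the proof of Theorem \ref{MainResult} --- which uses only Fubini--Tonelli and the substitutions $st=u$ and $u=e^{-t}$, with no Sidon hypothesis entering --- gives, for every $B \subset \N^*$,
$$\sum_{b \in B} \frac{1}{b^\alpha} = \frac{1}{\Gamma(\alpha)} \int_0^1 f_B(u) f_\alpha(u)\,du.$$
Maximizing the right-hand side over $\mathfrak{B}_g$ thus coincides with maximizing $\sum_{b \in B} b^{-\alpha}$ over $B_2[g]$, and taking $B_\alpha$ with $f_{B_\alpha} = \varphi_0$ concludes. The only modification compared to the Sidon case is the factor $\sqrt{g}$ in the pointwise bound on $\varphi$, which leaves the integrability of the dominating function unaffected; I therefore do not anticipate any genuinely new obstacle beyond bookkeeping.
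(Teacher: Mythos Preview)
Your proof is correct and follows essentially the same route as the paper's: define a compact set $\mathfrak{B}_g=\mathcal{A}\cap G^{-1}(\mathcal{A}_g)$ characterizing $B_2[g]$ via the representation-counting operator, derive the pointwise bound $\varphi(t)\le\sqrt{2g}\,\sqrt{t/(1-t)}$, and then repeat the $\Gamma$-identity argument of Theorem~\ref{MainResult}. The only cosmetic difference is that the paper drops the factor $1/2$ in $G$ and correspondingly enlarges the coefficient range of $\mathcal{A}_g$ to $\{0,\dots,2g\}$, which is an equivalent normalization.
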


\begin{proof}
Let $g\geq 2$. Following the same idea as for the proof of Theorem \ref{mainResult}, we define
$$\mathcal{A}_g=\left\{ \sum_{n=0}^\infty{ a_n z^n}\in\mathcal{O}(\D) \ : \ a_i\in\{0,\dots,2g\} \ \forall i\in\N \right\},$$
and 
\begin{align*}
G \ : \  \mathcal{O}(\D) & \longrightarrow \mathcal{O}(\D) \\  f & \longmapsto \left(z \mapsto f^2(z)+f(z)^2\right).
\end{align*}
Similarly, one shows that $\mathfrak S_{g}=\mathcal{A} \cap G^{-1}(\mathcal{A}_g)$ is compact and that $S\in\mathcal B_2[g]$ if and only if $f_S\in\mathfrak S_{g}$.

Furthermore, let $f\in \mathfrak S_{g}$, we have $G(f) \in \mathcal{A}_g$ and so for all $t \in ]0,1[$
$$ G(f)(t) \leq \sum_{n=1}^{\infty}2gt^n=\frac{2gt}{1-t}.$$
But $G(f)(t)\geq f(t)^2$, so
$$f(t) \leq C_{g}\left(\frac{t}{1-t}\right)^{1/2}, $$
where $C_{g}\in\R$. We conclude in the same way as for Theorems \ref{mainResult} and \ref{MainResult}.
\end{proof}


Our method can be adapted to many other linear constraints. We give an example below.

\subsection{Sum-free sets}

A set is said to be sum-free if it has no solution to the equation $x+y=z$. Let $\mathcal{F}$ the set of the free-sum sets in $\N^*$.

\begin{theorem}
    Let $\alpha\in\R$. There exists $F_\alpha\in\mathcal F$ such that
    $$\sum_{f\in F_\alpha} \frac{1}{f^\alpha}=\sup\left\lbrace \sum_{f\in F} \frac{1}{f^\alpha} \ : \ F\in\mathcal F\right\rbrace.$$
\end{theorem}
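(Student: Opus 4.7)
The plan is to transpose the compactness-and-continuity argument of Theorems \ref{mainResult} and \ref{MainResult} to the sum-free setting. The key observation is that $F \subset \N^*$ is sum-free if and only if no integer $n$ belongs simultaneously to $F$ and to $F+F$, which in generating-function terms reads $[z^n] f_F \cdot [z^n] f_F^{\,2} = 0$ for every $n \geq 1$. I would therefore define
$$\mathfrak F \ := \ \mathcal A \cap \bigcap_{n \geq 1} \bigl\{ f \in \mathcal{O}(\D) \, : \, ([z^n]f) \cdot ([z^n] f^2) = 0 \bigr\},$$
so that $F \in \mathcal F$ iff $f_F \in \mathfrak F$. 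Each coefficient functional $f \mapsto [z^n] f$ is continuous on $\mathcal{O}(\D)$ by Cauchy's formula and $f \mapsto f^2$ is continuous, so each defining equation cuts out a closed subset; $\mathfrak F$ is therefore a closed subset of the compact set $\mathcal A$ (as in the proof of Theorem \ref{mainResult}) and hence compact.

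Next I would split on $\alpha$. For $\alpha \leq 1$, the set $F_\alpha := \{2k+1 : k \geq 0\}$ of odd integers is sum-free (the sum of two odd integers is even) and satisfies $\sum_{k \geq 0}(2k+1)^{-\alpha} = +\infty$, so it trivially attains the supremum, which is itself $+\infty$. For $\alpha > 1$ I would reuse the Laplace-type identity from the proof of Theorem \ref{MainResult}, which is valid for any $F \subset \N^*$:
$$\sum_{f \in F} f^{-\alpha} \ = \ \frac{1}{\Gamma(\alpha)} \int_0^1 f_F(u) \, \frac{(-\ln u)^{\alpha-1}}{u} \, du.$$
Every $f \in \mathcal A$ obeys $f(u) \leq u/(1-u)$, so the integrand is dominated by $(-\ln u)^{\alpha-1}/(1-u)$, which is integrable on $(0,1)$ as soon as $\alpha > 1$ (the singularity at $u = 1$ behaves like $(1-u)^{\alpha-2}$ and the one at $u = 0$ is only logarithmic). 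Dominated convergence then makes the functional $f \mapsto \int_0^1 f(u) (-\ln u)^{\alpha-1} u^{-1} \, du$ continuous on $\mathfrak F$; by compactness it attains its maximum at some $g_\alpha \in \mathfrak F$, and taking $F_\alpha$ to be the support of $g_\alpha$ concludes this case.

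The main obstacle lies in the first step. Unlike the Sidon and $B_2[g]$ cases, where the constraint takes the form $G(f) \in \mathcal A_g$ for a continuous polynomial map $G$ and a compact target $\mathcal A_g$, the sum-free condition couples $f$ and $f^2$ through a Hadamard rather than a Cauchy product, and does not fit the same template. The remedy is the reformulation above: each scalar equation $[z^n] f \cdot [z^n] f^2 = 0$ is a closed polynomial condition on the Taylor coefficients of $f$, and the conjunction over $n$ is still closed, which is all that compactness of $\mathfrak F$ requires. Once this is granted, the rest of the argument proceeds exactly as in the Sidon case.
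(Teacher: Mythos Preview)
Your proof is correct and follows the same overall architecture as the paper's: split at $\alpha=1$, use the odd integers for $\alpha\le 1$, and for $\alpha>1$ realize $\mathcal F$ as a compact subset of $\mathcal A\subset\mathcal O(\D)$, then apply the Laplace representation and dominated convergence with the trivial bound $f(u)\le u/(1-u)$.

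The one genuine difference is in how you certify closedness. You impose the countable family of scalar conditions $[z^n]f\cdot[z^n]f^2=0$, each closed by continuity of coefficient functionals and of $f\mapsto f^2$. The paper instead keeps to the $G^{-1}(\text{compact})$ template you thought unavailable: it sets $G'(f)=f^2+(zf)'$ and observes that $F$ is sum-free iff every coefficient $c_n=\sum_k\mathbbm 1_F(k)\mathbbm 1_F(n-k)+(n+1)\mathbbm 1_F(n)$ lies in $\{0,\dots,n+1\}$, so $\mathfrak F=\mathcal A\cap G'^{-1}(\mathcal A')$ with $\mathcal A'=\{\sum a_nz^n:a_n\in\{0,\dots,n+1\}\}$ compact. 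The $(zf)'$ term is the trick that converts the Hadamard-type constraint into a coefficient bound and restores the template. Your route is a bit more elementary and sidesteps the need to invent $\mathcal A'$; the paper's route has the virtue of uniformity with the Sidon and $B_2[g]$ cases. Either way the conclusion is the same.
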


\begin{proof}
    The set of odd integers is trivially a sum-free set and $\sum_{n\in\N}\frac{1}{(2n+1)^{\alpha}}=+\infty$ for all $\alpha\leq 1$. All that remains is to show the result for $\alpha>1$.

    This time, we define
$$\mathcal{A}'=\left\{ \sum_{n=0}^\infty{ a_n z^n}\in\mathcal{O}(\D) \ : \ a_i\in\{0,...,i+1\} \ \forall i\in\N \right\},$$
and 
\begin{align*}
H \ : \  \mathcal{O}(\D) & \longrightarrow \mathcal{O}(\D) \\  f & \longmapsto f^2-(zf)' .
\end{align*}
Indeed,
    $$f_F(z)^2+(zf_F(z))'=\sum_{n\geq 0}z^n \underbrace{\left( \sum_{k=0}^n \mathbbm{1}_F(k)\mathbbm{1}_F(n-k)+(n+1)\mathbbm{1}_F(n)  \right)}_{c_n}, $$
and so $F\in\mathcal F$ if and only if $f_F\in\mathcal A$ and $c_n\in\{ 0,\dots,n+1\}$. Now $\mathcal A'$ is compact, so $F\in\mathcal F$ if and only if $f_F$ is in the compact $\mathcal A\cap H^{-1}(\mathcal A')$. Finally, of course, for $t\in ]0,1[$, we have
$$f_F(t)\leq \frac{t}{1-t},$$
and $\int_{0}^1{\frac{|f_\alpha(t)| t}{1-t}}dt < +\infty$ for $f_\alpha :u \mapsto \frac{(-\ln(u))^{\alpha-1}}{u}$ and $\alpha>1$, so we can conclude as for Theorem \ref{MainResult}.
\end{proof}

Since $\sum_{n\in\N} \mathbbm{1}_E(n)n^{-\alpha}$ is continuous on $\mathcal P(\N)$ (with the discrete product topology) for all $\alpha>1$, this last example does not require the use of our method. In fact, whatever the original pattern, if we know that the application to be maximized is continuous, we have a more direct existence result, Theorem \ref{allPatterns} below, which is fairly elementary but does not seem to be well known in the literature, since the existence of sets maximizing certain functions is often said to be still unknown. 

For instance, building on earlier work of Erd\H{o}s \cite{ErdosSumfree}, Levine and O'Sullivan \cite{LevineSullivan} investigated the reciprocal sums of strongly sum-free sets, that is, sets in which no element can be expressed as a finite sum of the remaining elements. They introduced $\lambda$ as the supremum of the reciprocal sums taken over all strongly sum-free sets, and established both upper and lower bounds for this quantity. In their concluding remarks, they state that the primary problem is that of determining the precise value of $\lambda$ and those strongly sum-free sets that attain this supremum 'if they exist' (what this article shows). Although subsequent work aimed at improving the bounds no longer explicitly mentions this issue, it nevertheless retains the use of the supremum rather than the maximum (see \cite{AbbottFreeSum, ChenFreeSum, ZhangFreeSum}).

\subsection{All patterns}

We provide $\mathcal P(\N)$ with the discrete product topology (with $\mathcal P(\N)$ seen as $\{0,1\}^\N$). Our next result concerns all closed parts of $\mathcal P(\N)$, which covers almost all the usual patterns. For example, $B_h[g]$ is a closed part. Indeed, if $(B_k)_{k\in\N}\in B_h[g]^\N$ tends to $B\notin B_h[g]$, then there exist $k\in\N$ and $(b_1^{(i)},\dots,b_h^{(i)})\in B^h$, $i\in \{1,\dots, g+1\}$ such that $b_1^{(i)}+\dots+b_h^{(i)}=k$ for all $i$. But as $(B_k)$ tends towards $B$, there exists a rank $N$ such that $b_j^{(i)}\in B_N$ for all $j,i$, which is absurd.

\begin{theorem}\label{allPatterns}
    Let $\mathcal{A}$ be a closed part of $\mathcal P(\N)$ and $\mathcal{F}:\mathcal A \rightarrow \R$ continuous, then there exists $A\in\mathcal A$ such that $\mathcal F(A)=\sup \mathcal{F}(\mathcal A)$.
\end{theorem}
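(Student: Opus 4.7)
The plan is to observe that the topological setting is essentially the one already handled for $\mathfrak{S}$ in Theorem \ref{mainResult}: we are looking at a closed subset of a compact space, and a continuous real-valued function on a compact space attains its supremum. So the proof reduces to checking compactness of $\mathcal{P}(\N)$ in the product topology.

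First, I would note that $\{0,1\}$ is a (trivially) compact topological space when endowed with the discrete topology. By Tychonoff's theorem, the product $\{0,1\}^{\N}$ is compact for the product topology. Identifying $\mathcal{P}(\N)$ with $\{0,1\}^{\N}$ via the indicator map $A \mapsto (\mathbbm{1}_A(n))_{n\in\N}$, we obtain that $\mathcal{P}(\N)$, endowed with the discrete product topology, is compact. Since $\mathcal{A}$ is, by hypothesis, a closed subset of this compact space, $\mathcal{A}$ is itself compact.

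Next, since $\mathcal{F} : \mathcal{A} \to \R$ is continuous and $\mathcal{A}$ is compact, the image $\mathcal{F}(\mathcal{A})$ is a compact subset of $\R$, hence it is bounded and closed. In particular, $\sup \mathcal{F}(\mathcal{A})$ is finite and belongs to $\mathcal{F}(\mathcal{A})$, which is exactly the existence of some $A \in \mathcal{A}$ with $\mathcal{F}(A) = \sup \mathcal{F}(\mathcal{A})$.

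There is no real obstacle here: the only point worth being slightly careful about is the appeal to Tychonoff's theorem, but since each factor $\{0,1\}$ is finite, one can alternatively invoke the countable version (the product of countably many compact metric spaces is compact metrizable, e.g.\ via the metric $d(A,B) = 2^{-\min\{n : \mathbbm{1}_A(n)\neq \mathbbm{1}_B(n)\}}$) and use the sequential characterization of compactness through a diagonal extraction, exactly in the spirit of the compactness argument used for $\mathcal{A}$ in the proof of Theorem \ref{mainResult}. This avoids any reliance on the axiom of choice and keeps the proof self-contained.
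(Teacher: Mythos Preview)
Your proof is correct and follows essentially the same route as the paper: the paper also reduces the statement to the compactness of $\mathcal P(\N)=\{0,1\}^\N$ (phrased there as sequential compactness via the diagonal process) and then uses that a continuous function on a compact set attains its supremum. The only cosmetic difference is that you invoke Tychonoff first and mention the diagonal argument as an alternative, whereas the paper goes directly with sequential compactness.
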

\begin{proof}
    This proof is very elementary and relies essentially on the fact that $\mathcal P(\N)$ is sequentially compact for the product discrete topology. It's a well-known fact, and once again results from the diagonal process. The rest is a textbook case.

    Let $(A_k)\in\mathcal A^\N$ such that $\mathcal F(A_k)\rightarrow\sup \mathcal{F}(\mathcal A)$. As $\mathcal P(\N)$ is sequentially compact, there is $A\in\mathcal P(\N)$ such that $A_{n_k}\rightarrow A$. But $A\in \mathcal A$ because $\mathcal A$ is close. Finally $\mathcal F$ is continous so $\mathcal F(A)=\lim \mathcal F(A_{n_k})=\sup \mathcal{F}(\mathcal A)$.
\end{proof}

The difficulty in using Theorem \ref{allPatterns} is not in showing that $\mathcal A$ is closed but rather that $\mathcal F$ is continuous, hence the importance of the method in Section \ref{sectionMainResult}. Let's give an example of the application of Theorem \ref{allPatterns}.

\begin{cor}\label{Bhg}
Let $g\geq 2$, $h \geq 2$, and $\alpha> \frac{1}{h}$, there exists $B_\alpha\in B_h[g]$ such that
    $$\sum_{b\in B_\alpha}\frac{1}{b^\alpha}=\sup\left\{ \sum_{b\in B} \frac{1}{b^\alpha} \ : \ B\in B_h[g]\right\}<+\infty.$$
\end{cor}

\begin{proof}
    This is a direct application of Theorem \ref{allPatterns}, since we saw at the start of this subsection that $B_h[g]$ is closed. All that remains is to show that for all $\alpha>1/h$, the function
 \begin{align*}
\mathcal{F}_\alpha \ : \  B_h[g] & \longrightarrow \R \\  B & \longmapsto \sum_{b\in B} b^{-\alpha} .
\end{align*}
is continuous. But if $z$ is such that $|z| <1$ and $B\in B_h[g]$, we have
$$   (f_B(z))^h = \sum_{(b_1,...,b_h) \in B^h} {z^{b_1+\dots+b_h}} 
         = \sum_{k \in \mathbb{N}} {R_{h,B}(k)z^k},$$
 where $R_{h,B}=|\{(b_1, \dots, b_h) \in B^h : \sum_i{b_i}=k\}|.$ By the hypothesis on $B$, there are only $g$ such distinct $h$-tuples up to permutation, and thus $ R_{h,B}(k) \leq h! g$. Finally
 $$  |(f_B(z))^h|  \leq gh!\sum_{k \in \mathbb{N}} {|z|^k}\leq \frac{h!g}{1-|z|},$$
which shows the continuity of $\mathcal{F}_\alpha$ and concludes the proof of the lemma.
\end{proof}
We can obviously extend the existence result of Corollary \ref{Bhg} to any $\alpha$, but we don't know if the supremum is finite for $\alpha=1/h$. Indeed, the best-known is a greedy algorithm \cite{limsupBHg}, which gives $\{a_n\}\in B_h[g]$ such that $a_n\ll n^{h+(h-1)/g}$.

\begin{question}
    Let $g\geq 2$ and $h \geq 2$, is
    $\sup\left\{ \sum_{b\in B} b^{-1/h} \ : \ B\in B_h[g]\right\}$ finite?
\end{question}

\printbibliography

\end{document}